\theoremstyle{plain}
\newtheorem{lemma}{Lemma}[section]
\newtheorem{theorem}[lemma]{Theorem}
\newtheorem{proposition}[lemma]{Proposition}
\newtheorem{corollary}[lemma]{Corollary}
\theoremstyle{definition}
\newtheorem{example}[lemma]{Example}
\numberwithin{equation}{section} \thispagestyle{empty} \voffset
\begin{document}
\baselineskip 15truept
\title{On Unitification of $*$-rings}

\date{}

\author{ Sanjay More, Anil Khairnar and B. N. Waphare}
 \address {\rm Department of Mathematics, Anantrao Pawar College, Pune-412115, India.}
	\email{\emph{sanjaymore71@gmail.com}} 
	 
\address{\rm Department of Mathematics, Abasaheb Garware College, Pune-411004, India.}
 \email{\emph{ask.agc@mespune.in; anil\_maths2004@yahoo.com}}
 
\address{\rm Center For Advanced Studies in Mathematics, Department of Mathematics, 
Savitribai Phule Pune University, Pune-411007, India.}
 \email{\emph{bnwaph@math.unipune.ac.in; waphare@yahoo.com}}
 
 \subjclass[2010]{Primary 16W10; Secondary 16D25} 
 
\maketitle {\bf \small Abstract:} {\small 
	S. K. Berberian raised the open problem ``Can every weakly Rickart $*$-ring be
	embedded in a Rickart $*$-ring? with preservation of right projections?" Berberian has given a partial solution to this problem. Khairnar and Waphare raised a similar problem for p.q.-Baer $*$-rings and gave a partial solution. In this paper, we give more general partial solutions to both the problems.  
 }\\
 \noindent {\bf Keywords:}  weakly Rickart $*$-rings, weakly p.q.-Baer $*$-rings,  projections, central cover.
\section{Introduction} 
\indent   Kaplansky \cite{Kap} introduced Baer rings and Baer $*$-rings to
abstract various properties of $AW^*$-algebras (that is a $C^*$-algebra which is also a Baer $*$-ring), von Neumann
algebras and complete $*$-regular rings. The concept of a Baer $*$-ring is naturally motivated from the study of functional analysis. For example, every von Neumann algebra is a Baer $*$-algebra. One can refer \cite{Anil2,Anil3,Anil4,Lia2,Lia3, Lia} for recent work on rings with involution.

     \indent Throughout this paper, $R$ denotes an associative ring. An {\it ideal} of a ring $R$, 
    we mean a two sided ideal. A ring $R$ is said to be {\it reduced} if it does not 
    have a nonzero nilpotent element.  A ring $R$ is said to be {\it abelian} if its every idempotent element is central.  
   Let $S$ be a nonempty subset of $R$. We write
          $r_R(S)= \{a \in R ~|~ s a = 0, ~\forall ~s \in S \}$, and is called the
          {\it right annihilator} of $S$ in $R$, and $l_R(S)= \{a \in R ~|~  a s = 0, ~\forall ~s \in S \}$, 
          is the
         { \it left annihilator} of $S$ in $R$.         
    A {\it $*$-ring $R$} is a ring equipped with an involution $x \rightarrow x^* $,
                        that is, an additive anti-automorphism of the period at most two.
    An element $e$ of a $*$-ring  $R$ is called a {\it projection} if it is self-adjoint 
         (i.e. $e=e^*$)
       and idempotent (i.e. $e^2=e$). 
      A $*$-ring $R$ is said to be a {\it  Rickart $*$-ring},
  if for each $x \in R$, $r_R(\{x\})=eR$, where $e$ is a projection
  in $R$. 
   For each element $a$ in a Rickart $*$-ring, there is unique projection
  $e$ such that $ae=a$ and $ax=0$ if and only if $ex=0$, called the {\it
  	right projection} of $a$ denoted by $RP(a)$. Similarly, the left
  projection $LP(a)$ is defined for each element $a$ in Rickart $*$-ring.
  A $*$-ring $R$ is said to be a {\it weakly Rickart  $*$-ring}, if for any $x \in R$, there exists a projection $e$
  such that (1) $xe=x$, and (2) if $xy=0$ then $ey=0$.\\
    Recall the following propositions and an open problem from \cite{Ber}.
    \begin{proposition}[{\cite[Proposition 2, page 13]{Ber}}] \label{Pro1}  If $R$ is Rickart $*$-ring, then $R$ has a unity element and the involution of $R$ is proper.        	
 \end{proposition}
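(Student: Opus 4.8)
The plan is to extract the unity by feeding the element $0$ into the Rickart condition, and then to obtain properness (i.e.\ $x^*x=0\Rightarrow x=0$) from a short symmetric annihilator computation.

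For the unity, I would first observe that $r_R(\{0\})=R$, since $0\cdot a=0$ for every $a\in R$. The Rickart hypothesis then supplies a projection $e$ with $R=r_R(\{0\})=eR$. Because every $a\in R$ lies in $eR$, writing $a=eb$ and using $e^2=e$ gives $ea=a$, so $e$ is a left identity. Now I would exploit that $e$ is self-adjoint: applying the involution to $ea=a$ yields $a^*e^*=a^*$, i.e.\ $a^*e=a^*$, and since $x\mapsto x^*$ is a bijection the element $a^*$ ranges over all of $R$ as $a$ does, so $ce=c$ for every $c\in R$. Hence $e$ is also a right identity, and $R$ has unity $1=e$.

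For properness, suppose $x^*x=0$; I must show $x=0$. The relation $x^*x=0$ says exactly that $x\in r_R(\{x^*\})$. Applying the Rickart condition to $x^*$ produces a projection $f$ with $r_R(\{x^*\})=fR$. From $x\in fR$ I get $fx=x$, while $f\in fR=r_R(\{x^*\})$ gives $x^*f=0$; taking involution of this last equation and using $f^*=f$ and $(x^*)^*=x$ yields $fx=0$. Comparing the two identities $fx=x$ and $fx=0$ forces $x=0$, so the involution is proper.

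The only genuinely inventive step is recognizing that the unity should come from applying the Rickart condition to $x=0$; everything afterward is routine idempotent-and-involution bookkeeping. The properness argument is the standard annihilator symmetry, the one subtlety being to apply the hypothesis to $x^*$ rather than to $x$, so that the assumption $x^*x=0$ places $x$ inside the annihilator that the Rickart condition describes.
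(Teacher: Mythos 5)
Your proof is correct and coincides with the classical argument of Berberian that the paper cites (the paper itself only recalls this proposition without reproving it): unity is obtained from $r_R(\{0\})=R=eR$ together with the involution trick $xe=(ex^*)^*=x$, and properness from writing $r_R(\{x^*\})=fR$ so that $x^*x=0$ forces $fx=x$ and $fx=(x^*f)^*=0$ simultaneously. No gaps; this is essentially the same proof.
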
 	
 \begin{proposition}  [{\cite[Proposition 2, page 28]{Ber}}]\label{Pro2}
 	The following condition on a $*$-ring $R$ are equivalent:\\
 	(a)$R$ is a Rickart $*$-ring;\\
 	(b)$R$  is weakly Rickart $*$-ring with unity. 
 \end{proposition}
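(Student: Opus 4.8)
The plan is to prove the two implications separately, and in both directions the presence (or construction) of a unity is what lets me pass between the two formulations of the Rickart condition by means of the complementary projection $1-e$. So before anything else I would fix notation: recall that for a projection $e$ one has $eR=\{a\in R \mid ea=a\}$, and that if $e$ is a projection then so is $1-e$ (self-adjointness and idempotence of $1-e$ are immediate once a unity is available).

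For the implication (a)$\Rightarrow$(b), I would first invoke Proposition \ref{Pro1} to conclude that a Rickart $*$-ring $R$ automatically has a unity element; this disposes of the ``with unity'' requirement for free. It then remains only to verify that $R$ is weakly Rickart. Here I would simply take, for a given $x\in R$, the right projection $e=RP(x)$, whose existence and defining properties are recorded in the excerpt: $xe=x$, and $xy=0$ if and only if $ey=0$. The forward direction of this biconditional is exactly condition (2) in the definition of a weakly Rickart $*$-ring, while $xe=x$ is condition (1). Thus $RP(x)$ is the required projection, and no further work is needed. (Equivalently, one may write $r_R(\{x\})=fR$ with $f$ a projection and take $e=1-f$, checking $xe=x$ and $ey=0$ whenever $xy=0$ via $y\in fR$; this is the computation that underlies the existence of $RP(x)$.)

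For the harder and more substantive direction (b)$\Rightarrow$(a), I would start from a weakly Rickart $*$-ring $R$ possessing a unity $1$. Given $x\in R$, the weakly Rickart hypothesis supplies a projection $e$ with $xe=x$ and with $xy=0\Rightarrow ey=0$. The key move is to pass to the complement $g=1-e$, which is again a projection, and to claim that $r_R(\{x\})=gR$. For the inclusion $gR\subseteq r_R(\{x\})$ I would note that $xg=x(1-e)=x-xe=0$, so $g$ itself annihilates $x$ on the right, and since $r_R(\{x\})$ is a right ideal this forces $gR\subseteq r_R(\{x\})$. For the reverse inclusion I would take any $y$ with $xy=0$; then condition (2) gives $ey=0$, whence $gy=(1-e)y=y$, so $y=gy\in gR$. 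Combining the two inclusions yields $r_R(\{x\})=gR$ with $g$ a projection, which is precisely the definition of a Rickart $*$-ring.

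I expect the main obstacle — really the only conceptual point — to be the role of the unity, since it is exactly the existence of $1$ that legitimizes forming $1-e$ (respectively $1-f$) and thereby translating between the ``$xe=x$ plus annihilator-killing'' description and the ``$r_R(\{x\})=gR$'' description. Without a unity these two conditions genuinely differ, so I would take care to use $1$ only where it is available: supplied by Proposition \ref{Pro1} in the direction (a)$\Rightarrow$(b), and assumed outright in (b)$\Rightarrow$(a). The remaining verifications (that $1-e$ is self-adjoint and idempotent, and the elementary annihilator manipulations) are routine and I would keep them brief.
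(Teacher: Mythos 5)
Your proof is correct and coincides with the standard argument: the paper states this proposition only as a citation from Berberian without reproducing a proof, and the classical proof is exactly your two-way passage between $RP(x)$ and the complementary projection $1-e$ generating $r_R(\{x\})$, with the unity supplied by Proposition \ref{Pro1} in one direction and assumed in the other. Nothing further is needed.
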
       
 Proposition \ref{Pro1} says that the unity element exist in any Rickart $*$-ring and the proposition \ref{Pro2} naturally create the following problem.\\
 {\bf Problem 1:} Can every weakly Rickart $*$-ring be
 embedded in a Rickart $*$-ring with preservation of $RP$'s?\\
 In \cite{Ber} Berberian has given a partial solution to this problem. 
 
  \indent  According to Birkenmeier et al. \cite{Bir}, a $*$-ring $R$ is said to be a {\it quasi-Baer $*$-ring} if the right annihilator of every
  ideal of $R$ is generated, as a right ideal, by a projection in $R$.        
  Birkenmeier et al. \cite{Bir5} introduced principally quasi-Baer (p.q.-Baer) $*$-rings as a generalization of quasi-Baer $*$-rings.
  A $*$-ring $R$ is said to be a {\it p.q.-Baer $*$-ring}, if for every principal right ideal $aR$ of $R$, $r_R(aR)=eR$, where $e$ is a projection in
    $R$, it follows that $l_R(Ra)=Rf$ for
       a suitable projection $f$. Note that an abelian Rickart $*$-ring is a p.q.-Baer $*$-ring, and a reduced p.q.-Baer $*$-ring is a Rickart $*$-ring.   
   We say that an element $x$ of a $*$-ring $R$
  possesses a {\it central cover} if there exists a smallest central
  projection $h \in R$ such that $hx=x$. If such a projection $h$ exists,
  then it is unique, it is called the central cover of $x$, denoted
  by $h=C(x)$.   
 In \cite{Anil} Khairnar  and Waphare proved that the central cover exists for every element in any p.q.- Baer$*$-ring.        
\begin{theorem}[{\cite[Theorem 2.5]{Anil}}] Let $R$ be a p.q.- Baer $*$-ring and $x\in R$.
	Then $x$ has a central cover $e \in R$. Further,
	$xRy=0$ if and only if $yRx=0$ if and only if $ey=0$.\\
	That is $r_R(xR)=r_R(eR)=l_R(Rx)=l_R(Re)=(1-e)R=R(1-e)$.
\end{theorem}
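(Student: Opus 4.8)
The plan is to manufacture the central cover from the \emph{left} annihilator of $Rx$ and then match it against the right annihilator of $xR$. Using the p.q.-Baer hypothesis, I would write $l_R(Rx)=Rf$ with $f$ a projection and set $e:=1-f$, claiming $e=C(x)$. Since $f\in l_R(Rx)$ gives $fRx=0$, evaluating at $1$ yields $fx=0$, hence $ex=(1-f)x=x$. So $e$ is at least a projection with $ex=x$; the work is to show it is central and smallest.

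The crucial point is that $f$ is central, and this is exactly where the $*$-structure is used. First I would verify that $l_R(Rx)$ is a two-sided ideal: it is visibly a left ideal, and if $a\in l_R(Rx)$ then $(as)rx=a(sr)x=0$ for all $r,s$, so it is a right ideal as well. Consequently $Rf$ is two-sided, which forces $fR\subseteq Rf$ and therefore $fr=frf$, i.e. $fR(1-f)=0$; this is right semicentrality of $f$. Applying the involution to $fR(1-f)=0$ gives $(1-f)Rf=0$, so $f$ is left semicentral too, and a projection that is both left and right semicentral commutes with everything, hence is central. Granting centrality of $f$, minimality of $e$ is routine: if $h$ is any central projection with $hx=x$, then $(1-h)x=0$ and centrality gives $(1-h)Rx=0$, so $(1-h)\in l_R(Rx)=Rf$, whence $(1-h)(1-f)=0$ and $e\le h$. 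Thus $e=C(x)$ exists.

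For the annihilator identities I would run the symmetric argument on the right. The p.q.-Baer hypothesis gives $r_R(xR)=gR$, and just as above $r_R(xR)$ is two-sided, so $g$ is a central projection. It then remains to identify $g$ with $f$. On one hand $xRf=(xf)R=0$ since $xf=fx=0$, so $f\in r_R(xR)=gR$ and $f=gf$; on the other hand $g\in r_R(xR)$ gives $xg=0$, and since $g$ is central $gx=xg=0$, so $gRx=0$ and $g\in l_R(Rx)=Rf$, giving $g=fg$. Hence $f=g$ and $r_R(xR)=fR=(1-e)R$. The remaining equalities are direct computations with the central projection $e$: one checks $r_R(eR)=l_R(Re)=(1-e)R=R(1-e)$, and because $e$ is central these all coincide with $l_R(Rx)=Rf$. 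Reading membership off $r_R(xR)=(1-e)R=l_R(Rx)$ then yields $xRy=0\iff ey=0\iff yRx=0$.

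The step I expect to be the real content — and the only place the hypotheses are genuinely needed — is the passage from a one-sided annihilator to a \emph{central} projection: two-sidedness of $l_R(Rx)$ makes $f$ semicentral, and the involution upgrades semicentrality to centrality. Everything after that is bookkeeping with central idempotents, including the coincidence $f=g$, which rests on the fact that a central idempotent annihilating $x$ on one side annihilates it on the other. In a ring without involution neither centrality nor the identification $f=g$ would be available, so the $*$-structure is indispensable throughout.
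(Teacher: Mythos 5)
Your proof is correct and follows essentially the same route as the source: this paper states the theorem without proof, quoting it from \cite{Anil}, and the argument there is precisely your semicentrality argument --- $l_R(Rx)$ (resp.\ $r_R(xR)$) is a two-sided ideal, so the projection generating it is semicentral, and applying the involution to $fR(1-f)=0$ upgrades it to a central projection, after which minimality of $e=1-f$ and the annihilator identities are routine bookkeeping. One minor point: your repeated use of the unity element (e.g.\ ``evaluating at $1$'') is legitimate only because a p.q.-Baer $*$-ring has a unity, which is Proposition 2.4 of \cite{Anil} (recalled in this paper) and should be cited rather than taken for granted.
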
      
In \cite{Anil} Khairnar  and Waphare introduced the concept of weakly p.q.- Baer $*$-ring. A $*$-ring $R$ is said to be a {\it weakly
	p.q.-Baer $*$-ring}, if every $x\in R$ has a central cover $e \in
R$ such that, $xRy=0$  if and only if $ey=0$. According to \cite{Bir5}, the involution $*$ of a $*$-ring
$R$ is {\it semi-proper}, if for any $a\in R$, $aRa^*= 0$ implies $a=0$.\\
 Recall the following results and an open problem from \cite{Anil}. 
\begin{proposition}[{\cite[Proposition 2.4]{Anil}}]
	If $R$ is p.q.Baer $*$-ring, then $R$ has the unity element and the involution of $R$ is semi-proper.
\end{proposition}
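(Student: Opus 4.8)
The plan is to prove the two conclusions in order, first producing a unity and then using it to check semi-properness; both will come from specializing the defining p.q.-Baer condition to a convenient element. The guiding observation is that the hypothesis applies to every principal right ideal, including the zero ideal $0R=\{0\}$, and this degenerate case is exactly what manufactures an identity.

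For the unity I would apply the condition to $a=0$. Then $r_R(0R)=r_R(\{0\})=R$, so there is a projection $e$ with $R=eR$. I expect $e$ to be the sought identity. Writing a typical element as $a=eb$ and using $e^{2}=e$ gives $ea=a$, so $e$ is at least a left identity. The step where the involution is genuinely needed is passing to a right identity: from $a^{*}=ea^{*}$ (valid since $a^{*}\in R=eR$) and $e=e^{*}$, applying the involution yields $a=(ea^{*})^{*}=a\,e^{*}=ae$. Hence $e$ acts as a two-sided identity and I set $1:=e$.

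With the unity available, semi-properness should follow by a short annihilator argument. Assuming $aRa^{*}=0$, I would read this as $a^{*}\in r_R(aR)$. The hypothesis gives a projection $g$ with $r_R(aR)=gR$, so $a^{*}=ga^{*}$, and taking adjoints (again using $g=g^{*}$) gives $a=ag$. On the other hand $g\in gR=r_R(aR)$ forces $arg=0$ for all $r$, and evaluating at $r=1$ yields $ag=0$. Combining $a=ag$ with $ag=0$ gives $a=0$, which is precisely semi-properness.

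The main obstacle is conceptual rather than computational: recognizing that the trivial principal right ideal $0R$ must be fed into the definition to obtain $R$ itself as a right annihilator, and then noticing that the defining condition only delivers $R=eR$ as right ideals, so that a \emph{left} identity is immediate while the upgrade to a two-sided identity really requires the involution. Once this is settled, everything else is bookkeeping with annihilators. As a safeguard, if one is reluctant to allow $a=0$ in the right-annihilator clause, the companion left-annihilator clause $l_R(R\cdot 0)=R=Rf$ produces a right identity $f$ that the involution again converts into the unity, so the conclusion is robust.
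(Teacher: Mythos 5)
Your proof is correct and follows essentially the same route as the source: specializing the p.q.-Baer condition to the zero ideal to get $R=eR$ with $e$ a projection, using the involution to upgrade the resulting left identity to a two-sided unity, and then deducing semi-properness from $a^{*}\in r_R(aR)=gR$ together with $aRg=0$. This is the same pattern Berberian uses for the Rickart case (Proposition~\ref{Pro1}), so no further comment is needed.
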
         
\begin{theorem} [{\cite[Theorem 3.9]{Anil}}] The following conditions on a
	$*$-ring $R$ are equivalent:\\
	(a) $R$ is a p.q.-Baer $*$-ring.\\
	(b) $R$ is a weakly p.q.-Baer $*$-ring with unity.
\end{theorem}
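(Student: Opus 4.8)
The plan is to establish the two implications separately. The forward direction (a) $\Rightarrow$ (b) is essentially a matter of unpacking results already on the table. If $R$ is a p.q.-Baer $*$-ring, then by the Proposition above $R$ has a unity, and by Theorem 2.5 every $x \in R$ possesses a central cover $e = C(x)$ satisfying $xRy = 0$ if and only if $ey = 0$. These two statements are exactly the definition of a weakly p.q.-Baer $*$-ring together with the existence of a unity, so (b) follows with no further work.

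For the substantive direction (b) $\Rightarrow$ (a), let $R$ be a weakly p.q.-Baer $*$-ring with unity and fix $a \in R$; I must exhibit projections $e, f$ with $r_R(aR) = eR$ and $l_R(Ra) = Rf$. I would first dispose of the right annihilator. Observe that $r_R(aR) = \{\, y : aRy = 0 \,\}$, and let $h = C(a)$ be the central cover of $a$. The weakly p.q.-Baer hypothesis gives $aRy = 0 \Leftrightarrow hy = 0$. Since $h$ is a projection, a short computation shows $\{\, y : hy = 0 \,\} = (1-h)R$, and $1-h$ is again a projection because $(1-h)^2 = 1-h$ and $(1-h)^* = 1-h$. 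Thus $e := 1-h$ works.

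The left annihilator is where the involution must be brought in, and this is the step I expect to carry the only real subtlety, because the weakly p.q.-Baer axiom is stated purely in terms of right multiplication by the element in question. Here $l_R(Ra) = \{\, y : yRa = 0 \,\}$. The maneuver is to apply $*$ termwise: since $(yra)^* = a^* r^* y^*$ and $r \mapsto r^*$ permutes $R$, we get $yRa = 0 \Leftrightarrow a^* R y^* = 0$. Now I apply the central-cover property to the element $a^*$: writing $h' = C(a^*)$, the hypothesis yields $a^* R y^* = 0 \Leftrightarrow h' y^* = 0$, and taking adjoints (using $h'^* = h'$) converts this into $y h' = 0$. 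Hence $l_R(Ra) = \{\, y : y h' = 0 \,\} = R(1-h')$, so $f := 1-h'$ is the required projection. The governing idea throughout the reverse direction is to trade each left-sided statement about $a$ for a right-sided statement about $a^*$, so that the single right-handed weakly p.q.-Baer hypothesis applies; once that translation is in place, the verifications that $1-h$ and $1-h'$ are projections and that the kernels of central projections are the expected principal one-sided ideals are entirely routine.
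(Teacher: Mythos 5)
Your proof is correct, and it is essentially the intended argument: the direction (a)$\Rightarrow$(b) is exactly the combination of the two results recalled in the paper from the cited source (the proposition giving unity and semi-properness, and the theorem asserting that every element of a p.q.-Baer $*$-ring has a central cover $e$ with $xRy=0 \Leftrightarrow ey=0$), while your converse, identifying $r_R(aR)=(1-C(a))R$ and handling $l_R(Ra)$ by passing to $a^*$ via the involution, is the standard computation and matches the annihilator identities $r_R(xR)=(1-e)R=R(1-e)$ already recorded in that recalled theorem. Note that the present paper states this equivalence only as a citation (Theorem 3.9 of Khairnar and Waphare) without reproducing its proof, so the comparison is against that source's argument, which your proposal faithfully reconstructs.
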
  
In view of the above theorem, the following problem is raised in \cite{Anil}.\\
{\bf Problem 2:} Can every weakly p.q.-Baer $*$-ring be
embedded in a p.q.-Baer $*$-ring? with preservation of central covers?\\        
\indent  In \cite{Anil}, Khairnar and Waphare provided a partial solution to Problem 2. \\
     \indent  In the second section of this paper, we give a more general partial solution of problem 1 and  in section 3, we give a more general partial solution of problem 2. 
  
 \section{Unitification of Weakly Rickart $*$-rings}    
 
 Recall the definition of unitification of a $*$-ring given by Berberian \cite{Ber}.
 Let $R$ be a $*$-ring. We say that $R_1$ is a unitification of $R$, if there exists a  ring $K$, such that,\\
 1) $K$ is an integral domain with involution (necessarily
 proper), that is, $K$ is a commutative $*$-ring with unity
 and without divisors of zero (the identity involution is
 permitted),\\
 2) $R$ is a $*$-algebra over $K$ (that is, $R$ is a left $K$-module such that,
 identically
 $1a=a,~\lambda (ab)=(\lambda a)b=a (\lambda b),~ and~(\lambda a)^*=\lambda ^*
 a^*$ for $\lambda \in K$ and $a,b \in R$).\\
 3) $R$ is torsion free $K$-module (that is $\lambda a=0$ implies $\lambda =0 $ or $a=0$).\\
 Define $R_1=R \oplus K$ (the additive group direct sum), thus
 $(a, \lambda)=(b, \mu)$ means, by the definition that $a=b$ and $\lambda
 =\mu$, and addition in $R_1$, is defined by the formula $(a, \lambda)+(b, \mu)=(a+b, \lambda +
 \mu)$. Define  $(a, \lambda)(b, \mu)=(ab+ \mu a+ \lambda b, \lambda
 \mu)$, $\mu (a, \lambda)=(\mu a, \mu \lambda)$, $(a, \lambda)^*=(a^*, \lambda
 ^*)$. Evidently, $R_1$ is also a $*$-algebra over $K$, has unity
 element $(0, 1)$ and $R$ is a
 $*$-ideal in $R_1$.\\ 
  The following lemmas are elementary facts about unitification $R_1$ of a $*$-ring $R$.
 \begin{lemma} [{\cite[Lemma 1, page 30]{Ber}}] \label{tc4s4lm1} With notations as in the definition
 	of unitification, if an involution on $R$ is proper, then so is the
 	involution of $R_1$.
 \end{lemma}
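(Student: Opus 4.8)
The plan is to unwind the definition of a proper involution and reduce the claim to a direct computation with the product and involution formulas of $R_1$. Recall that an involution on a $*$-ring is \emph{proper} when $x^*x=0$ forces $x=0$. So to prove that the involution of $R_1$ is proper, I would start with an arbitrary element $(a,\lambda)\in R_1$ satisfying $(a,\lambda)^*(a,\lambda)=(0,0)$ and aim to show $(a,\lambda)=(0,0)$.

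First I would compute $(a,\lambda)^*(a,\lambda)$ explicitly. Using $(a,\lambda)^*=(a^*,\lambda^*)$ together with the product rule $(a,\lambda)(b,\mu)=(ab+\mu a+\lambda b,\lambda\mu)$, one finds
\[
(a,\lambda)^*(a,\lambda)=\bigl(a^*a+\lambda a^*+\lambda^* a,\ \lambda^*\lambda\bigr).
\]
Equating this to $(0,0)$ splits the hypothesis into a scalar equation $\lambda^*\lambda=0$ in $K$ and a ring equation $a^*a+\lambda a^*+\lambda^* a=0$ in $R$.

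The next step handles the scalar equation. Since $K$ is an integral domain with involution, $\lambda^*\lambda=0$ gives $\lambda^*=0$ or $\lambda=0$; as $*$ is a bijection of $K$, either alternative yields $\lambda=0$. (This is where the integral-domain hypothesis on $K$ is essential; it effectively says the induced involution on $K$ is itself proper.) Substituting $\lambda=0$ into the ring equation collapses it to $a^*a=0$, and now properness of the involution on $R$ forces $a=0$. Hence $(a,\lambda)=(0,0)$, as required.

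I expect the computation of the product $(a,\lambda)^*(a,\lambda)$ to be the only place demanding care, and it is routine; the genuinely load-bearing observation is that the zero-divisor-free structure of $K$ lets one kill the scalar component first, after which properness on $R$ finishes the job. There is no real obstacle here beyond bookkeeping, so I would keep the argument short and present it essentially as the one displayed identity followed by the two successive reductions.
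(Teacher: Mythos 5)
Your proof is correct and is essentially the standard argument (the paper itself only cites Berberian for this lemma, and Berberian's proof proceeds exactly this way): compute $(a,\lambda)^*(a,\lambda)=(a^*a+\lambda a^*+\lambda^*a,\ \lambda^*\lambda)$, use that $K$ is an integral domain with involution to get $\lambda=0$ from $\lambda^*\lambda=0$, then use properness on $R$ to get $a=0$. Your parenthetical observation that the integral-domain hypothesis makes the involution of $K$ proper is precisely the point the paper flags with ``(necessarily proper)'' in its definition of unitification, so nothing is missing.
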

\begin{lemma} [{\cite[Lemma 3, page 30]{Ber}}] \label{tc4s4lm2} With notations as in the definition
	of unitification, let $x \in R$ and let $e$ be a projection in $R$. Then $RP(x)=e$
	in $R$ if and only if $RP((x,0))=(e,0)$ in
	$R_1$.
\end{lemma}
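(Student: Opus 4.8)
The plan is to reduce everything to componentwise computations in $R_1 = R \oplus K$ and then isolate a single annihilator equivalence. First I would record the meaning of a statement of the form $RP(z) = p$: for an element $z$ and a projection $p$ in a given ring, $RP(z) = p$ is the conjunction of (i) $p = p^* = p^2$, (ii) $zp = z$, and (iii) $zw = 0 \iff pw = 0$ for all $w$ in the ambient ring. Thus on the $R$ side, $RP(x) = e$ unwinds to ``$e$ is a projection, $xe = x$, and $xy = 0 \iff ey = 0$ for all $y \in R$,'' while on the $R_1$ side, $RP((x,0)) = (e,0)$ unwinds to the analogous three conditions for the pair $(x,0)$, $(e,0)$ in $R_1$.

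Next I would translate each of the three $R_1$-conditions back into $R$ using the multiplication and involution formulas from the definition of unitification. Since $(e,0)^* = (e^*,0)$ and $(e,0)(e,0) = (e^2,0)$, the pair $(e,0)$ is a projection in $R_1$ exactly when $e$ is a projection in $R$; since $(x,0)(e,0) = (xe,0)$, condition (ii) for $(x,0)$ is exactly $xe = x$. For condition (iii), a general element of $R_1$ has the form $(y,\nu)$ with $y \in R$ and $\nu \in K$, and one computes $(x,0)(y,\nu) = (xy + \nu x,\,0)$ and $(e,0)(y,\nu) = (ey + \nu e,\,0)$. Hence the $R_1$-annihilator condition becomes: for all $y \in R$ and $\nu \in K$,
\[
xy + \nu x = 0 \iff ey + \nu e = 0 .
\]

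The crux is therefore to show that, in the presence of $xe = x$ and the $R$-level equivalence $xy = 0 \iff ey = 0$, this scalar-twisted annihilator equivalence holds; this is the step I expect to be the only real content. The key manoeuvre is to absorb the scalar using $xe = x$ and $e^2 = e$: from $xy + \nu x = 0$ one rewrites $\nu x = \nu(xe)$ to obtain $x(y + \nu e) = xy + \nu x = 0$, and then the $R$-condition applied to the element $y + \nu e \in R$ yields $e(y + \nu e) = ey + \nu e = 0$. Conversely, left-multiplying $ey + \nu e = 0$ by $x$ and again using $xe = x$ gives $xy + \nu x = x(ey + \nu e) = 0$. Once this equivalence is in hand, the two lists of three conditions coincide, and the biconditional of the lemma follows; in particular, taking $\nu = 0$ recovers the $R$-conditions from the $R_1$-conditions, which handles the easy direction. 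I would also note that the torsion-freeness of $R$ over $K$ is not needed for this lemma: it enters only in justifying that $R_1$ is a well-defined $*$-algebra and that the embedding of $R$ is faithful.
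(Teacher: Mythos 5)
Your proof is correct and follows essentially the same route as the cited source: the paper does not reprove this lemma but quotes it from Berberian, and Berberian's argument is exactly your componentwise reduction together with the scalar-absorption step $x(y+\nu e)=xy+\nu x=0 \Rightarrow e(y+\nu e)=ey+\nu e=0$, with the $\nu=0$ specialization handling the converse. Your side remark that torsion-freeness plays no role in this particular lemma is also accurate.
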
 
   
  Berberian has given a partial solution to Problem 1 as follows.
\begin{theorem}[{\cite[Theorem 1, page 31]{Ber}}] \label{Th1}
	Let $R$ be a weakly Rickart $*$-ring. If there exists an involutory integral domain $K$ such that $R$ is a $*$-algebra over $K$  and it is a torsion-free $K$-module, then $R$ can be embedded in a Rickart $*$-ring with preservation of RP's. 
\end{theorem}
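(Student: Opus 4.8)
The plan is to take $R_1=R\oplus K$, the unitification of $R$ furnished by the hypothesis, as the target ring, and to prove that $R_1$ is a Rickart $*$-ring into which $R$ embeds as the $*$-ideal $\{(x,0):x\in R\}$ with preservation of $RP$'s. Since $R_1$ carries the unity $(0,1)$, by Proposition \ref{Pro2} it suffices to show that $R_1$ is weakly Rickart; equivalently, I will exhibit for each element a projection generating its right annihilator in $R_1$. Two preliminaries are needed first. The involution of $R$ is proper: if $x^*x=0$, then applying the weakly Rickart property to $x^*$ gives a projection $f=RP(x^*)$ with $x^*f=x^*$ and (taking $z=x$ in the defining implication) $fx=0$; since $f=f^*$, taking adjoints yields $x^*=x^*f=(fx)^*=0$, so $x=0$. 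By Lemma \ref{tc4s4lm1} the involution of $R_1$ is then proper as well. Moreover, for every $x\in R$ both $RP(x)$ and $LP(x)=RP(x^*)^*$ exist.

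The computation splits according to the $K$-component. For an element $(x,0)$, writing $e=RP_R(x)$ and using $x(y+\nu e)=0$ together with the weakly Rickart implication for $e$, one checks directly that $r_{R_1}((x,0))=\bigl(1-(e,0)\bigr)R_1$, with $1-(e,0)=(-e,1)$ a projection. This simultaneously proves $RP_{R_1}((x,0))=(e,0)$, which is the content of Lemma \ref{tc4s4lm2} and yields the preservation of $RP$'s under the embedding.

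The essential case is $(x,\lambda)$ with $\lambda\neq 0$. Since $K$ is an integral domain, the condition $(x,\lambda)(y,\nu)=0$ forces $\nu=0$, and a short computation gives $r_{R_1}((x,\lambda))=\{(y,0):xy=-\lambda y\}$; hence it suffices to produce a projection $f\in R$ with $I:=\{y\in R:xy=-\lambda y\}=fR$, for then $r_{R_1}((x,\lambda))=(f,0)R_1$. Here the torsion-free hypothesis enters decisively: for $y\in I$ and $q=LP(x)$ one has $-\lambda y=xy=qxy=-\lambda qy$, so $\lambda(y-qy)=0$ and therefore $y=qy$, giving $I\subseteq qR$. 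The corner $qRq$ is a weakly Rickart $*$-ring with unity $q$ (the $RP$ in $R$ of any $a\in qRq$ lies below $q$), hence a Rickart $*$-ring by Proposition \ref{Pro2}; putting $\xi=qxq$, one verifies $I=\{y\in qR:(\xi+\lambda q)y=0\}$. Now let $h=RP_{qRq}(\xi+\lambda q)$ and set $f=q-h$. The inclusion $fR\subseteq I$ is immediate from $(\xi+\lambda q)f=0$. For the reverse inclusion, given $y\in I$ the element $yy^*q$ lies in $qRq$ and is annihilated by $\xi+\lambda q$, so the Rickart property in $qRq$ gives $h\,yy^*q=0$, i.e.\ $h\,yy^*=0$ (since $y^*=y^*q$); multiplying on the right by $h$ shows $(hy)(hy)^*=0$, and properness of the involution of $R$ upgrades this to $hy=0$, that is $fy=y$ and $y\in fR$. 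Thus $I=fR$ and $(f,0)$ generates $r_{R_1}((x,\lambda))$.

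I expect this last case, $\lambda\neq 0$, to be the main obstacle: unlike the weakly Rickart axiom in $R$, which only controls annihilators of single elements, one must show that the twisted right ideal $I=\{y:xy=-\lambda y\}$ is principal with projection generator, and both hypotheses on $K$ are used precisely here — torsion-freeness to confine $I$ to the corner $qRq$, and the involutory-domain structure (via properness, Lemma \ref{tc4s4lm1}) to descend the annihilation from $yy^*q$ to $y$. Having handled both cases, every principal right annihilator in $R_1$ is generated by a projection, so $R_1$ is a Rickart $*$-ring; the map $x\mapsto(x,0)$ is a $*$-monomorphism of $R$ onto a $*$-ideal of $R_1$, and by the $\lambda=0$ computation (Lemma \ref{tc4s4lm2}) it preserves $RP$'s, completing the proof.
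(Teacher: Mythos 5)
Your proof is correct, but it takes a genuinely different route from the one the paper (following Berberian) uses. The statement itself is quoted from Berberian without proof; the paper's own version of this argument is the proof of its generalization, Theorem \ref{ps2T4}, which runs as follows: reduce to self-adjoint elements via Lemma \ref{ps2L4}, then for self-adjoint $a$ and $\lambda\neq 0$ invoke the largest-projection lemma (Lemma \ref{ps2L3}; in Berberian's original the torsion-free hypothesis is used exactly where the paper uses condition (3) of Theorem \ref{Th2}) to get the largest projection $g$ with $ag=-\lambda g$, and finally verify $RP([a,\lambda])=[-g,1]$. You dispense with both ingredients: you treat an arbitrary, not necessarily self-adjoint, element $(x,\lambda)$ directly, use torsion-freeness to confine the twisted annihilator $I=\{y:xy=-\lambda y\}$ to $qR$ with $q=LP(x)$, note that the corner $qRq$ is weakly Rickart with unity $q$ and hence Rickart by Proposition \ref{Pro2}, and produce the generating projection as $f=q-RP_{qRq}(qxq+\lambda q)$, with properness of the involution (the $yy^*q$ device) bridging from annihilation inside the corner to annihilation of all of $I$. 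What your route buys is economy: no self-adjoint reduction (Lemma \ref{ps2L4} is never needed) and no separate largest-projection lemma, $LP(x)$ playing the role that self-adjointness plays in Lemma \ref{ps2L3}. What the paper's route buys is generality: its uses of the hypotheses on $K$ are isolated inside Lemma \ref{ps2L3}, which survives when the integral domain and torsion-freeness are replaced by a commutative unital $K$ with condition (3), whereas your two key steps --- deducing $\nu=0$ from $\lambda\nu=0$, and deducing $y=qy$ from $\lambda(y-qy)=0$ --- use the Berberian hypotheses irreducibly, so your argument proves exactly this theorem and does not extend to Theorem \ref{Th2} or Theorem \ref{ps2T4}. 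One step you state too tersely: that $RP_R(a)\leq q$ for $a\in qRq$ (so that $qRq$ is weakly Rickart with unity) itself needs a short argument --- from $aq=a$ one gets $(RP(a)q-RP(a))z=0$ for all $z\in R$, and properness, which you had already established, then forces $RP(a)q=RP(a)$ --- but this is routine and does not affect correctness.
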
 

After 1972, there was not much headway towards the solution of Problem 1. In 1996 Thakare and Waphare supplied partial solutions wherein the condition on the underlying weakly Rickart $*$-rings was weakened in two distinct ways. For the solution of this open problem, Berberian used the condition that $R$ is torsion free left $K$-module, and $K$ is an integral domain. Thakare and Waphare gave another solution in which the condition of torsion free is replaced by other condition. They establish the following. 
\begin{theorem}[{\cite[Theorem 2]{Tha3}}] \label{Th2}
	A weakly Rickart $*$-ring $R$ can be embedded into a Rickart $*$-ring, provided there exists a ring $K$ such that
	\begin{enumerate}
\item $K$ is an integral domain with involution,
	\item  $R$ is $*$-algebra over $K$, and
	\item  For any $\lambda \in K-\{0\}$, there exist a projection $e_\lambda$ that is an upper bound for the set of left projections of the right annihilators of $\lambda$, that is if $x\in R$ and $\lambda x=0$ then $LP(x)\leq e_\lambda$.
		\end{enumerate}
\end{theorem}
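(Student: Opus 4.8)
The plan is to take Berberian's unitification $R_1 = R \oplus K$, which is available since hypotheses (1) and (2) supply the integral domain $K$ with involution over which $R$ is a torsion-need-not-hold $*$-algebra, and to prove that $R_1$ is a Rickart $*$-ring. Once this is done, the embedding $x \mapsto (x,0)$ realizes $R$ as a $*$-ideal of $R_1$, and Lemma \ref{tc4s4lm2} guarantees preservation of right projections, so the theorem follows. Since $R_1$ carries the unity $(0,1)$, Proposition \ref{Pro2} reduces the task to showing that $R_1$ is \emph{weakly} Rickart, i.e.\ that every $(x,\lambda)$ admits a right projection. Beforehand I would record that the involution of $R_1$ is proper: the involution of a weakly Rickart $*$-ring is proper (if $x^*x=0$, applying the defining property of $RP(x^*)=LP(x)$ to the equation $x^*x=0$ gives $LP(x)\,x=0$, whence $x=0$ since $LP(x)\,x=x$), so Lemma \ref{tc4s4lm1} transports properness to $R_1$.

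From $(x,\lambda)(b,\mu)=(xb+\mu x+\lambda b,\ \lambda\mu)$, membership in $r_{R_1}((x,\lambda))$ forces $\lambda\mu=0$ and $xb+\mu x+\lambda b=0$. When $\lambda=0$ this is precisely what the weakly Rickart structure of $R$ handles: if $e=RP(x)$ in $R$, then $(0,1)-(e,0)$ is a projection generating $r_{R_1}((x,0))$, and no new hypothesis is used (this is the content of Lemma \ref{tc4s4lm2}). All the work lies in the case $\lambda\neq 0$. Here $K$ is a domain, so $\lambda\mu=0$ forces $\mu=0$ and the annihilator collapses to $N_\lambda\oplus 0$, where
\[
  N_\lambda=\{\,b\in R : xb+\lambda b=0\,\}.
\]
A direct check shows $N_\lambda$ is a right ideal of $R$ and that $N_\lambda\oplus 0$ is a right ideal of $R_1$; the remaining point is to generate it by a projection.

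Seeking $RP((x,\lambda))$ in the form $(0,1)-(q,0)$ with $q$ a projection of $R$, I would translate its two defining properties into the requirements that $q\in N_\lambda$ (so that $(x,\lambda)(q,0)=0$) and that $qb=b$ for every $b\in N_\lambda$ (so that $N_\lambda=qR$). This is exactly where hypothesis (3) must replace the torsion-freeness of Theorem \ref{Th1}. The mechanism is as follows: for $b\in N_\lambda$ set $g=LP(x)$; since $(1-g)x=0$ we get $(1-g)(xb)=0$, and because $xb=-\lambda b$ this yields $\lambda(1-g)b=0$. Thus $(1-g)b$ lies in the right annihilator of $\lambda$, so by (3) its left projection is dominated by $e_\lambda$, while trivially $LP((1-g)b)\le 1-g$. (One may first pass to a self-adjoint coefficient by replacing $(x,\lambda)$ with $(x,\lambda)^*(x,\lambda)$: for $u=xb+\lambda b$ one computes $u^*u=0$, so properness shows the right annihilator is unchanged, the scalar becoming $\kappa=\lambda^*\lambda$, a nonzero self-adjoint element of $K$ affording the two-sided control $sg=gs=s$ on the $R$-part $s$.)

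The main obstacle is the final assembly. Using the lattice of projections of the weakly Rickart $*$-ring $R$, one must manufacture from $g=LP(x)$ and the bound $e_\lambda$ a single projection $q$ with $q\in N_\lambda$ and $qb=b$ on $N_\lambda$, and then verify $N_\lambda=qR$. The relation $\lambda(1-g)b=0$ together with $LP((1-g)b)\le e_\lambda\wedge(1-g)$ is precisely the substitute for the implication ``$\lambda(1-g)b=0\Rightarrow(1-g)b=0$'' that torsion-freeness supplies in Theorem \ref{Th1}: here the part of $N_\lambda$ escaping $gR$ is not killed outright but is confined by the fixed projection $e_\lambda$, and organizing this confinement into a single idempotent generator is the delicate step. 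Once $q$ is constructed and its properties checked, $(0,1)-(q,0)$ is the right projection of $(x,\lambda)$; hence $R_1$ is weakly Rickart, so Rickart by Proposition \ref{Pro2}, and Lemma \ref{tc4s4lm2} delivers the preservation of $RP$'s, completing the embedding.
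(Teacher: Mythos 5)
Your overall skeleton is sound and matches the standard strategy: adjoin $K$, use properness plus Lemma \ref{ps2L4} to reduce to self-adjoint elements, split on $\lambda=0$ versus $\lambda\neq 0$, and let hypothesis (3) play the role that torsion-freeness plays in Theorem \ref{Th1}. But the proof has a genuine gap, and you flag it yourself: the ``final assembly'' of a single projection $q$ with $q\in N_\lambda$ and $qb=b$ for all $b\in N_\lambda$ is never carried out, and that assembly is not a routine verification --- it is the entire content of the paper's key Lemma \ref{ps2L3}. The paper constructs the needed projection as follows: for a self-adjoint $a$ and $\lambda\neq 0$, put $e=RP(a)\vee e_\lambda$ (so $a=ae=ea\in eRe$), set $h=RP(a-\lambda e)$ and $g=e-h$; then $ag=\lambda g$, and condition (3) is used exactly once, to show that any projection $k$ with $ak=\lambda k$ satisfies $k\le g$: from $\lambda(ek-k)=0$ one gets $LP(ek-k)\le e_\lambda\le e$, forcing $ek=k$, and then $(a-\lambda e)k=0$ gives $hk=0$, i.e.\ $k\le g$. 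With this extremal $g$ in hand the annihilator computation closes: for $b\in N_\lambda$ one takes $e_b=LP(b)$, notes $(a+\lambda e_b)b=0$ hence $ae_b=-\lambda e_b$ by the defining property of $LP$, so $e_b\le g$ and $gb=b$, giving $N_\lambda=gR$ and $RP=(0,1)-(g,0)$. Your sketch instead works with $g=LP(x)$ and the bound $LP((1-g)b)\le e_\lambda\wedge(1-g)$; this confines the torsion part but is aimed at the wrong projection (the left projection of $x$, not the extremal projection of Lemma \ref{ps2L3}), and no amount of lattice manipulation of $LP(x)$ and $e_\lambda$ alone produces the generator --- one genuinely needs the $RP(a-\lambda e)$ construction. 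As written, the theorem is not proved.

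Two secondary points. First, the paper does not work in $R_1=R\oplus K$ at all: it passes to the quotient $\hat{R_1}=R_1/N$, where $N$ is the kernel of $(a,\lambda)\mapsto L_a+\lambda I$ acting on $(R,+)$, and proves $RP([a,\lambda])=[-g,1]$ there (Theorems \ref{ps2T3} and \ref{ps2T4}). When $K$ is a domain your direct route through $R_1$ can in fact be repaired (properness of $R_1$ and the collapse $\mu=0$ both use only that $K$ is a domain), but the quotient is what allows the paper's stronger result, where $K$ is merely a commutative $*$-ring with unity and $R_1$ itself can fail to have proper involution. Second, your appeals to Lemmas \ref{tc4s4lm1} and \ref{tc4s4lm2} are formally out of scope, since both are stated under the definition of unitification, which includes torsion-freeness; their proofs happen to survive without it in the domain case, but you would need to rerun those arguments rather than cite the lemmas.
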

\begin{theorem}[{\cite[Theorem 7]{Tha3}}]
	$A$ weakly Rickart $*$-ring $R$ can be embedded into Rickart $*$-ring provided the characteristic of $R$ is non zero.
\end{theorem}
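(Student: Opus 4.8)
The plan is to reduce the problem to the case of prime characteristic, where Berberian's Theorem~\ref{Th1} applies verbatim with the prime field as coefficient ring, and then to reassemble. First I would record that the involution of a weakly Rickart $*$-ring is automatically proper. Indeed, for $a\in R$ let $e=RP(a)$, so that $ae=a$; since $ey=0$ forces $ay=(ae)y=a(ey)=0$, the defining implication upgrades to the equivalence $ay=0\iff ey=0$. Taking $y=a^*$ gives $aa^*=0\iff ea^*=0\iff ae=0\iff a=0$, using $(ea^*)^*=ae$ and $ae=a$. Replacing $a$ by $a^*$ then yields $a^*a=0\Rightarrow a^*=0\Rightarrow a=0$, so the involution is proper.

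The key step is to show that the (nonzero) characteristic $n$ is square-free. Suppose a prime $p$ satisfied $p^2\mid n$ and set $m=n/p$. Then $m^2=n\cdot(n/p^2)$ is a multiple of $n$, so for every $a\in R$ we have $(ma)^*(ma)=m^2\,a^*a=0$ (because $n$ annihilates $R$). By properness $ma=0$ for all $a$, contradicting the minimality of $n=\mathrm{char}\,R$. Hence $n=p_1\cdots p_k$ with distinct primes. Since $na=0$ for all $a$, the ring $R$ is a $\mathbb{Z}_n$-module, and the central idempotents of $\mathbb{Z}_n\cong\prod_i\mathbb{Z}_{p_i}$ split $R=\bigoplus_i R_i$ as a direct sum of $*$-ideals: these idempotents are self-adjoint scalars (identity involution on $\mathbb{Z}_n$), and cross products vanish since $\epsilon_i\epsilon_j=0$ for $i\neq j$. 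Each summand has $\mathrm{char}\,R_i=p_i$.

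To finish I would check that each $R_i$ inherits the weakly Rickart property (right projections in a finite product are computed coordinatewise, so $RP$ of an element supported in $R_i$ is again supported in $R_i$), and that over the field $K_i=\mathbb{Z}_{p_i}$, equipped with the identity (hence proper) involution, the natural module structure makes $R_i$ a torsion-free $*$-algebra. Theorem~\ref{Th1} then embeds $R_i$ into a Rickart $*$-ring $S_i$. Finally $R\cong\prod_i R_i$ embeds into $\prod_i S_i$, which is a Rickart $*$-ring because right annihilators and their generating projections in a finite product are taken coordinatewise and the product carries the tuple of unities.

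The hard part will be the square-free reduction: without it one gets stuck at a prime-power characteristic $p^e$ with $e\ge 2$, where $\mathbb{Z}_{p^e}$ is not an integral domain and Berberian's torsion-free-over-a-domain hypothesis fails. Properness of the involution, which I obtain directly from the weakly Rickart axioms, is exactly what forbids such characteristics and thereby makes the clean CRT decomposition available. All remaining verifications (componentwise behaviour of annihilators, and that a finite product of Rickart $*$-rings is Rickart) are routine and I would only sketch them.
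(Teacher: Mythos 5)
Your proposal is correct, but note that the paper itself contains no proof of this statement: it is quoted verbatim from Thakare--Waphare \cite{Tha3} (their Theorem 7), so there is no in-paper argument to compare against. Judged on its own merits, your chain of reasoning is sound: (i) the weakly Rickart axioms do force a proper involution, by exactly the computation you give ($aa^*=0\Rightarrow ea^*=0\Rightarrow a=(ea^*)^*=0$); (ii) properness does force a nonzero characteristic $n$ to be square-free, since $p^2\mid n$ and $m=n/p$ give $(ma)^*(ma)=m^2a^*a=0$, hence $mR=0$, contradicting minimality of $n$; (iii) the resulting $\mathbb{Z}_n\cong\prod_i\mathbb{Z}_{p_i}$ action splits $R$ into a finite direct sum of $*$-ideals $R_i=\epsilon_iR$ of prime characteristic $p_i$ (integer scalars commute with $*$, and $(\epsilon_ia)(\epsilon_jb)=(\epsilon_i\epsilon_j)(ab)=0$ for $i\neq j$); (iv) each $R_i$ inherits the weakly Rickart property, and over the field $\mathbb{Z}_{p_i}$ torsion-freeness is automatic, so Theorem \ref{Th1} applies with $K_i=\mathbb{Z}_{p_i}$ carrying the identity involution (which the paper's definition of unitification explicitly permits); (v) a finite product of Rickart $*$-rings is Rickart, with annihilators and right projections computed coordinatewise, so $R\cong\prod_iR_i$ embeds in $\prod_iS_i$ with preservation of $RP$'s. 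This reduction (square-free characteristic plus Chinese-remainder splitting, then a prime-characteristic embedding theorem applied componentwise) is the natural and, as far as the cited literature indicates, the standard route to this result; your only deviation of note is invoking Berberian's Theorem \ref{Th1} on each factor, whereas one could equally invoke Theorem \ref{Th2} there, since over a field condition (3) holds vacuously ($\lambda x=0$ with $\lambda\neq 0$ forces $x=0$, so $e_\lambda=0$ works). Either choice closes the argument; no gap remains.
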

The $*$-ring $ C_\infty(T) \bigoplus M_2(\mathbb{Z}_3)$ has no embedding in the sense of Theorem \ref{Th1} as the characteristic of $R$ is zero though it has unitification in the sense of Theorem \ref{Th2}. This is the example that shows that Theorem \ref{Th2} is an improvement over Theorem \ref{Th1} of Berberian.

   Now we prove the existence of largest projection corresponding to the self adjoin element by using condition (3) of the above theorem.
   
 \begin{lemma} \label{ps2L3}	
	  Let $R$ be a weakly Rickart $*$-ring with condition $(3)$ in Theorem \ref{Th2}. Then for any self-adjoint element $a$ and $ \lambda \neq 0 $ there exists a largest projection $g$ such that $ ag = \lambda g $. 
 \end{lemma}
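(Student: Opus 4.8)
The plan is to reduce the statement to an ordinary right‑annihilator computation inside a corner that is an honest Rickart $*$-ring, where the scalar $\lambda$ becomes a bona fide ring element. Since $a=a^*$, the right and left projections of $a$ coincide; write $p=RP(a)=LP(a)$, so $pa=a=ap$. The guiding observation is that every solution projection $g$ (i.e. $ag=\lambda g$) is \emph{almost} supported by $p$: from $a=pa$ we get $ag=p(ag)=p(\lambda g)=\lambda(pg)$, whence $\lambda(g-pg)=0$. Thus $g-pg$ is $\lambda$-torsion, and condition $(3)$ of Theorem \ref{Th2} (applicable since $\lambda\neq0$) gives $LP(g-pg)\le e_\lambda$, while trivially $LP(pg)\le p$. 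Setting $e:=p\vee e_\lambda$, one then checks $eg=e(pg)+e(g-pg)=pg+(g-pg)=g$, i.e. $g\le e$; crucially this bound is uniform over all solutions $g$.

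Next I would pass to the corner $eRe$. As a corner of a weakly Rickart $*$-ring it is again weakly Rickart, and it carries the unit $e$, so by Proposition \ref{Pro2} it is a genuine Rickart $*$-ring. The gain is that inside $eRe$ the scalar multiple $\lambda e$ is an actual element, so $t:=a-\lambda e\in eRe$ makes sense; note also $eae=a$, because $e\ge p$ forces $ea=e(pa)=(ep)a=pa=a$ and similarly $ae=a$. Applying the Rickart property to $t$, write $r_{eRe}(t)=g_0(eRe)$ for a projection $g_0\le e$; I claim $g_0$ is the required largest projection.

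For this I would verify the two defining properties. Since $g_0\in r_{eRe}(t)$ and $eg_0=g_0$, we get $0=tg_0=ag_0-\lambda g_0$, so $ag_0=\lambda g_0$ and $g_0$ is itself a solution. Conversely, for any projection $g$ with $ag=\lambda g$ the first paragraph gives $g\le e$, hence $tg=ag-\lambda eg=\lambda g-\lambda g=0$, so $g\in r_{eRe}(t)=g_0(eRe)$; this forces $g_0g=g$, i.e. $g\le g_0$. Thus $g_0$ dominates every solution while being a solution, which is exactly the assertion.

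The corner computations and the annihilator bookkeeping are routine. The step I expect to be the crux is the construction of the single bounding projection $e$: it is precisely here that hypothesis $(3)$ is used, to tame the $\lambda$-torsion piece $g-pg$, and it also relies on the existence of the finite supremum $p\vee e_\lambda$ in the lattice of projections of a weakly Rickart $*$-ring. Once a uniform bound $e$ is secured, the descent to the Rickart corner trivialises the rest, since there the elusive scalar $\lambda$ is realised as the honest element $\lambda e$ and the largest projection is delivered by the ordinary Rickart right annihilator of $t=a-\lambda e$.
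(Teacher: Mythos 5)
Your proof is correct and takes essentially the same route as the paper: both form $e = RP(a) \vee e_\lambda$, use condition $(3)$ to show every solution projection lies under $e$, and obtain the largest solution from the annihilator of $a - \lambda e$ in $eRe$. The paper simply writes $g = e - RP(a-\lambda e)$ directly in $R$ rather than invoking the corner theorem, but this is the same projection as your $g_0$.
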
 
\begin{proof}  Let $RP(a)=e' $ and $ e_{\lambda} $ be the projection as given by condition $(3)$ of Theorem \ref{Th2}. Let $e=e' \vee e_{\lambda}$, then $e' \leq e$ and $e'=e'e=ee'$. Since $ae'=a$, therefore $ae'e=ae$. Hence $a=ae'=ae$. Also, $a*=a$ implies that $a=ea=eae \in eRe$. Thus $a- \lambda e \in eRe$. Let $h=RP(a- \lambda e)$ and $g=e -h$. This gives $(a- \lambda e) g =0$, hence $ag= \lambda g$. Let $k$ be any projection in $R$ such that $a k = \lambda k$. Consider $ \lambda (ek-k)=e \lambda k - \lambda k= e ak - \lambda k = a k- \lambda k =0$.  Let $LP(e k -k)= f$. Therefore $f \leq e_{\lambda} \leq e$. That is $ek -k= f(ek-k)=fe(ek-k)=f(ek-ek)=0$. Consider $(a- \lambda e) k = ak -\lambda ek = ak - \lambda k =0$. Therefore $RP(a- \lambda e)k = hk=0$. Hence $kg=k(e-h)= ke -kh=k-0=k$. That is $k \leq g$. Therefore $g$ is largest projection such that $ag= \lambda g$.	
\end{proof}	

Recall the following lemma from \cite{Ber}.

\begin{lemma} [{\cite[Lemma 5, page 31]{Ber}}] \label{ps2L4}  
	Let $B$ be a $*$-ring with proper involution, $x \in B$  and $e$ be a  projection in $B$. Then $e$  is the right projection of $x$ if and only if $e$ is the right projection of $x^*x$.
\end{lemma}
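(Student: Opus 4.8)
The plan is to reduce both directions to a single consequence of properness of the involution: for every $y \in B$ one has $xy = 0$ if and only if $x^*xy = 0$. Indeed, $xy = 0$ trivially forces $x^*xy = 0$, while conversely $x^*xy = 0$ gives $(xy)^*(xy) = y^*(x^*x)y = 0$, so $xy = 0$ by properness. This equivalence matches up the right-annihilator conditions attached to $x$ and to $x^*x$, which is precisely half of what the definition of a right projection demands. I would isolate this equivalence first, since it is used in both implications.

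For the forward implication I would assume $e = RP(x)$, so $xe = x$ and $xy = 0 \iff ey = 0$. Left-multiplying $xe = x$ by $x^*$ yields $(x^*x)e = x^*x$, which is the first defining property of $RP(x^*x)$. For the annihilator property, the auxiliary equivalence combined with the hypothesis gives $(x^*x)y = 0 \iff xy = 0 \iff ey = 0$. Hence $e = RP(x^*x)$, and this direction is essentially automatic.

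The reverse implication is where the real work lies, and I expect it to be the main obstacle. Assuming $e = RP(x^*x)$, the chain $xy = 0 \iff (x^*x)y = 0 \iff ey = 0$ again follows from the auxiliary equivalence, so the only thing left is $xe = x$, which cannot be obtained by a naive cancellation. I would establish it by a $C^*$-style computation: from $(x^*x)e = x^*x$ together with its adjoint $e(x^*x) = x^*x$ (using self-adjointness of $x^*x$ and $e^* = e$), expand
\[
(x - xe)^*(x - xe) = x^*x - x^*xe - ex^*x + ex^*xe,
\]
and substitute $x^*xe = x^*x$, $ex^*x = x^*x$, and $ex^*xe = x^*x$, so that the right-hand side collapses to $0$. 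Properness of the involution then forces $x - xe = 0$, i.e.\ $xe = x$, completing the proof that $e = RP(x)$.

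It is worth noting that no unit element of $B$ is needed at any stage; all steps are carried out directly on the elements $x$, $x^*x$, and $e$, which is why the statement holds for an arbitrary, possibly non-unital, $*$-ring with proper involution.
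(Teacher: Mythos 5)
Your proposal is correct, and it is essentially the classical argument: the paper itself does not reprove this lemma (it is quoted from Berberian), and Berberian's proof rests on exactly the two ingredients you isolate, namely the equivalence $xy=0 \iff x^*xy=0$ forced by properness, and the expansion $(x-xe)^*(x-xe)=x^*x-x^*xe-ex^*x+ex^*xe=0$ to recover $xe=x$ in the reverse direction. Your observation that no unity is needed is also accurate and is the reason the lemma applies in the weakly Rickart (non-unital) setting.
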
	 

	We give a solution of Problem 1 in which the condition ``$K$ is an integral domain'' is replaced by ``$K$ is a commutative ring with unity''.
	
	Let $R$ be a $*$-ring, $K$ be a commutative $*$-ring with unity and $R$ be an algebra over $K$. Write $r=r(R,+)$ for the endomorphism ring of the additive group of $R$. Each $ a \in R $ determines an element $L_a$ of $r$ via $ L_a  x = a x $ and each $ \lambda \in  K $ an element $ \lambda I $ of $ r$ via $(\lambda I) x = \lambda x $.
	Let $ R_1 = R \oplus K $ with the $*$-algebra operations as follows $ ( a, \lambda ) + (b, \mu ) = ( a+b , \lambda + \mu ), ~\mu ( a , \lambda) = ( \mu a, \mu  \lambda ) $, $( a , \lambda)( b , \mu ) = ( ab + \mu  a + \lambda  b ,  \lambda ~\mu) $, $(a, \lambda)^* = ( a^* , \lambda^* ) $. Each $ ( a , \lambda )  \in R_1 $  determines an element $ L_a + \lambda I $ of $ r $ and the mapping $ ( a, \lambda) \rightarrow L_a  + \lambda I $ is ring homomorphism of $ R_1 $ onto  a sub-ring  $ S$ of $r$ namely the subring of $r$ generated by $ L_a $  and  $ \lambda I $.
	Define $  \mu (L_a  + \lambda I ) $ to be the ring product $( \mu I ) (L_a+ \lambda I )$, then $S $ becomes an algebra over $K$ and $(a, \lambda ) \rightarrow L_a + \lambda I $ is an algebra homomorphism of $ R_1 $ onto $S$.
	Let $N$ be the kernel of this mapping and write $ \hat{R_1} = R_1 / N $ for quotient algebra. Denote the coset $(a, \lambda) + N $ by $[a, \lambda]$. Hence $[a, \lambda]$ is an equivalence class of $ (a, \lambda) $ under equivalence relation $(a, \lambda) \equiv  (b, \mu)$ if and only if $ ax + \lambda x = b x + \mu x, ~\forall ~x \in R $.
	
	The following result leads to the partial solution of Problem 1.
	
\begin{theorem}	\label{ps2T3} With above notations, we have the following. 
	\begin{enumerate}
		\item The mapping $ a\rightarrow ~\bar{a}= [a,0] $ is an algebra homomorphism of $R$ into $\hat{R_1}$.
		\item If $ L(R) = \{x \in R ~|~ xy=0,~\forall~ y\in R\}=\{0\}$ (that is if the involution of $R$ is proper) then the mapping $a \rightarrow \bar{a} $ is injective.
		\item If the involution of $R$ is proper then $[a,\lambda] =0$ if and only if $[a^* , \lambda^*] =0 $ and the formula $[a, \lambda]^* =[a^*,\lambda^*]$ defines unambiguously proper involution in $\hat{R_1}$. 
		\item If $R$ is a weakly Rickart $*$-ring   $a \in R$ and $e$ is the right projection of $a$ in $ R $ then $ \bar{e}$ is the right projection of $ \bar{a}$ in $ \hat{R_1}$.
	\end{enumerate} 
\end{theorem}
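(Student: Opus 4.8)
The plan is to verify the four assertions in turn; the first two are routine bookkeeping, while the content lies in (3) and (4).

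For (1), I would observe that $a\mapsto(a,0)$ is already a $K$-algebra homomorphism of $R$ into $R_1$ (the product rule gives $(a,0)(b,0)=(ab,0)$ and $\mu(a,0)=(\mu a,0)$), and that the canonical surjection $R_1\to\hat{R_1}=R_1/N$ is a $K$-algebra homomorphism; their composite is $a\mapsto[a,0]=\bar a$, so (1) follows. For (2), note that $\bar a=0$ means $(a,0)\in N$, i.e. $L_a+0\cdot I=0$ in $r$, i.e. $ax=0$ for all $x\in R$, which is precisely $a\in L(R)$. Hence the kernel of $a\mapsto\bar a$ is $L(R)$, so the map is injective exactly when $L(R)=\{0\}$; moreover a proper involution forces $L(R)=\{0\}$, since $a\in L(R)$ gives $aa^*=0$ and hence $a=0$.

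For (3), the first task is to establish $[a,\lambda]=0\iff[a^*,\lambda^*]=0$, which is what makes the formula $[a,\lambda]^*=[a^*,\lambda^*]$ unambiguous. Assume $[a,\lambda]=0$, i.e. $ax+\lambda x=0$ for all $x\in R$. Applying the involution and using $(\lambda x)^*=\lambda^* x^*$ gives $x^*a^*+\lambda^* x^*=0$, and since $*$ is surjective this reads $ya^*+\lambda^* y=0$ for all $y\in R$. Then for arbitrary $x,z\in R$ I would compute $z(a^*x+\lambda^* x)=(za^*)x+\lambda^* zx=(-\lambda^* z)x+\lambda^* zx=0$, so $w:=a^*x+\lambda^* x$ lies in $r_R(R)$. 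Because the involution is proper, $r_R(R)=\{0\}$ (putting $z=w^*$ yields $w^*w=0$), hence $w=0$ for every $x$, i.e. $[a^*,\lambda^*]=0$. The converse is the same argument applied to $(a^*,\lambda^*)$, and well-definedness of the involution on $\hat{R_1}$ follows by additivity; the involution axioms (additivity, $[a,\lambda]^{**}=[a,\lambda]$, and the order-reversing product rule) then drop straight out of the defining formulas for $R_1$.

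Still within (3), properness on $\hat{R_1}$ is the step I expect to be the main obstacle. Suppose $[a,\lambda]^*[a,\lambda]=0$; by the product rule (equivalently, composing the operators $L_{a^*}+\lambda^* I$ and $L_a+\lambda I$) this says $(a^*a+\lambda a^*+\lambda^* a)x+\lambda^*\lambda x=0$ for all $x$, which I would factor as $a^*(ax+\lambda x)+\lambda^*(ax+\lambda x)=0$. Writing $y=ax+\lambda x$ for fixed $x$, this is $a^*y+\lambda^* y=0$. Applying the involution to $a^*y=-\lambda^* y$ yields $y^*a=-\lambda y^*$, and then $y^*y=(y^*a)x+\lambda y^*x=(-\lambda y^*)x+\lambda y^*x=0$. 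Properness of the involution on $R$ forces $y=0$; since $x$ was arbitrary this gives $ax+\lambda x=0$ for all $x$, i.e. $[a,\lambda]=0$, proving properness on $\hat{R_1}$.

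Finally, for (4), the ring $\hat{R_1}$ has unity $[0,1]$ and, by (3), a proper involution. Since $e=RP(a)$ is a projection and $a\mapsto\bar a$ is a $*$-homomorphism, $\bar e$ is a projection with $\bar a\bar e=\overline{ae}=\bar a$. It then remains to verify the annihilator condition $\bar a z=0\iff\bar e z=0$ for $z=[b,\mu]\in\hat{R_1}$. Here I would compute $\bar a z=[ab+\mu a,0]$, so $\bar a z=0$ means $a(bx+\mu x)=0$ for all $x\in R$; the weakly Rickart property of $e=RP(a)$, namely $aw=0\Rightarrow ew=0$, applied to $w=bx+\mu x$ gives $e(bx+\mu x)=0$ for all $x$, which is exactly $\bar e z=[eb+\mu e,0]=0$. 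The reverse implication is immediate from $\bar a\bar e=\bar a$. Hence $\bar e$ is the right projection of $\bar a$ in $\hat{R_1}$.
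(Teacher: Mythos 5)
Your proof is correct and follows essentially the same route as the paper: everything reduces to the observation that $[a,\lambda]=0$ iff $ax+\lambda x=0$ for all $x\in R$, part (3) is settled by producing an element $w=a^*x+\lambda^*x$ (resp. $y=ax+\lambda x$) with $w^*w=0$ and invoking properness of the involution on $R$, and part (4) is the same annihilator computation the paper gives. The only cosmetic difference is in (3), where you first apply the involution to the identity $ax+\lambda x=0$ and argue via $r_R(R)=\{0\}$, while the paper directly expands $(a^*x+\lambda^*x)^*(a^*x+\lambda^*x)$ and groups terms to use the hypothesis; both hinge on the identical properness argument.
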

\begin{proof}	$(1)$ and $(2)$ are easy verification.\\
	$(3):$ Observe that $[a ,\lambda]=0  $ if and only if $(a,\lambda) + N=N $ if and only if $(a,\lambda) \in N $ 
if and only if $(L_a +  \lambda I ) x= 0, ~ \forall ~x \in R$ if and only if $ax+\lambda x=0,~\forall ~x \in R$. Therefore in order to show $[a^*,\lambda^*]=0$ whenever $[a,\lambda] =0$ it is enough to show $a^*x +\lambda^*x=0,~\forall x\in R$.
	Consider $(a^*x+\lambda^* x)^*(a^*x+\lambda^*x)=(x^*a + \lambda x^* )(a^* x + \lambda^* x ) $ 
	$=  x^* aa^* x +  x^*  a \lambda^*   x +  \lambda x^*  a^*  x +  \lambda  x^*  \lambda^* x $ \\
	$=  x^* \left\lbrace a (a^* x ) + \lambda( a^*  x) \right\rbrace + x^* \left\lbrace a  (\lambda^* x)  + \lambda (\lambda^*  x) \right\rbrace $
	$= x^* 0 + x^* 0 = 0,~\forall x\in R $. Therefore
	$ a^* x + \lambda^* x=0, ~\forall x\in R $. 
	Hence $ [a,\lambda~]^* = [a^*,\lambda^* ]$  defines an involution in $ \hat{R_1} $. Also, $[a,\lambda]^* [a,\lambda] =0 $ implies that
	$[a^*,\lambda^*] [a,\lambda] =0$. That is $[a^* a +  \lambda a^* + \lambda^* a ,\lambda^* \lambda ]=0 $. This gives
	$(a^* a + \lambda  a^* + \lambda^* a)x +\lambda^* \lambda x=0,~\forall x\in R$. Therefore 
	$a^* ax +  \lambda a^* x + \lambda^* ax + \lambda^* \lambda x=0,~\forall x\in R$.
	Also, $ (a x + \lambda x )^* (a x + \lambda x ) =  ( x^* a^* + \lambda^* x^* ) ( ax+  \lambda x) = x^* a^* ax+x^* a^* \lambda x + \lambda^* x^* ax +\lambda^* x^* \lambda x = x^* [a^* ax+ a^* \lambda x + \lambda^*  a x +\lambda^* \lambda x ]= x^* [a^* ax+ \lambda a^*  x + \lambda^*  ax +\lambda^* \lambda x ] = x^*  0 =0,~ \forall x\in R $.
That is $ ax + \lambda x= 0,~\forall x \in R $. This gives $[a,\lambda] =0 $. Hence the involution $*$ is proper.\\
  $(4):$ Let $R$ be a weakly Rickart  $*$-ring $a \in R$ and $ e=RP(a)$. Then $ae=a$ and $ay= 0$ implies that  $ey=0 $ for $y \in R$. 
	We prove that $\bar{e} = RP(\bar{a}) $. 
	Consider $\bar{a} \bar{e} =[a,0][e,0]=[ae,0]=[a,0] = \bar{a} $.
	Let $\bar{y} = [b,\mu ]$ and $\bar{a} \bar{y} = 0 $.
	Then $[a,0] [b,\mu ] = [ab+ \mu a,0] = 0 $. This gives $(ab+\mu  a) x=0,~\forall x\in R$. That is $a(bx+\mu x)=0,~\forall x\in R $. This implies that
	$(eb+\mu e) x=0 ~\forall x\in R$. Therefore $ [eb + \mu e, 0] =0$. That is $ [e, 0] [b, \mu]=0$. This gives $ \bar{e} \bar{y} =0$. Therefore $\bar{e} = RP (\bar{a})$. 
\end{proof}	

 The following theorem gives a more general partial solution to Problem 1, we give the solution in which we replace integral domain $K$ by any commutative ring.
 
\begin{theorem}	\label{ps2T4} 
	 Let $R$ be a weakly Rickart $*$-ring and $K$ be a commutative $*$-ring with unity such that $R$ is a $*$-algebra over $K$ satisfying condition $(3)$ of Theorem \ref{Th2}. Then $R$ can be embedded in a Rickart $*$-ring with preservation of right projections.
\end{theorem}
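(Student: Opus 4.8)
The plan is to show that the unital $*$-ring $\hat{R_1}$ built above is itself a Rickart $*$-ring containing $R$. First I would note that a weakly Rickart $*$-ring has proper involution: if $x^*x=0$ and $f=RP(x^*)$, then $x^*x=0$ forces $fx=0$, whence $x^*=x^*f=(fx)^*=0$ and so $x=0$. With the involution of $R$ proper, Theorem \ref{ps2T3}$(2)$--$(3)$ make $a\mapsto\bar a=[a,0]$ an injective $*$-algebra homomorphism of $R$ into $\hat{R_1}$, where $\hat{R_1}$ carries a proper involution and the unity $[0,1]$, and Theorem \ref{ps2T3}$(4)$ gives preservation of right projections. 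By Proposition \ref{Pro2} it then suffices to prove that $\hat{R_1}$ is weakly Rickart, i.e. that every $z=[a,\lambda]$ has a right projection.

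Since $\hat{R_1}$ has proper involution, Lemma \ref{ps2L4} lets me replace $z$ by the self-adjoint element $z^*z=[c,\nu]$, where $\nu=\lambda^*\lambda$ and $c=a^*a+\lambda a^*+\lambda^*a$ is self-adjoint in $R$; finding $RP(z^*z)$ then yields $RP(z)$. I would split on $\nu$. If $\nu=0$, then $z^*z=\bar c$ lies in the copy of $R$, and Theorem \ref{ps2T3}$(4)$ gives $RP(z^*z)=\overline{RP(c)}$, which exists as $R$ is weakly Rickart. The work is in the case $\nu\neq0$.

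For $\nu\neq0$, I would apply Lemma \ref{ps2L3} to the self-adjoint $c$ and the nonzero scalar $-\nu$, obtaining the largest projection $g\in R$ with $cg=-\nu g$, and set $p=[-g,1]$. One checks directly that $p$ is a projection in $\hat{R_1}$ and, using $cg+\nu g=0$, that $[c,\nu]\,p=[c,\nu]$. The remaining annihilator condition is best read through the faithful action of $\hat{R_1}$ on $R$ (namely $[b,\mu]\cdot x=bx+\mu x$): if $[c,\nu][b,\mu]=0$ then $T=L_b+\mu I$ sends $R$ into $W=\{w\in R: cw=-\nu w\}$, and $p[b,\mu]=0$ amounts to $(I-L_g)T=0$, i.e. $gw=w$ for every $w$ in the image of $T$. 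Thus the whole proof reduces to the single claim that $g$ fixes all of $W$, not merely the projections in $W$. This is the main obstacle.

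I expect to clear this obstacle using condition $(3)$ of Theorem \ref{Th2}, which is exactly what controls the scalar torsion. In the construction of $g$ one has $e=RP(c)\vee e_\nu$ with $c=ec$, $g\le e$, and $g=e-h$ where $h=RP(c+\nu e)$. Given $w\in W$, left-multiplying $cw=-\nu w$ by $1-e$ and using $(1-e)c=0$ gives $\nu(1-e)w=0$; condition $(3)$ then forces $LP((1-e)w)\le e_\nu\le e$, so that this left projection $f$ satisfies $f(1-e)=0$, and hence $(1-e)w=f(1-e)w=0$, i.e. $ew=w$. Therefore $(c+\nu e)w=cw+\nu w=0$, which yields $hw=0$ since $h=RP(c+\nu e)$, and so $gw=(e-h)w=ew=w$. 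This proves the claim, identifies $p=RP(z^*z)=RP(z)$, and shows $\hat{R_1}$ is weakly Rickart; being unital, $\hat{R_1}$ is Rickart by Proposition \ref{Pro2}, and $a\mapsto\bar a$ embeds $R$ with preservation of right projections.
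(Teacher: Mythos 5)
Your proof is correct, and its skeleton is the same as the paper's: reduce via Lemma \ref{ps2L4} and Proposition \ref{Pro2} to producing right projections of self-adjoint elements of $\hat{R_1}$, invoke Lemma \ref{ps2L3} to obtain the largest projection $g$ with eigenvalue equation $cg=-\nu g$, and verify that $[-g,1]$ is the required right projection, with preservation of $RP$'s coming from Theorem \ref{ps2T3}(4). You differ from the paper in two execution points, both worth recording. First, you reduce to $z^*z=[c,\nu]$, whose representative $c=a^*a+\lambda a^*+\lambda^*a$ is self-adjoint in $R$ on the nose, and you split on $\nu=\lambda^*\lambda$; the paper takes an arbitrary self-adjoint class $[a,\lambda]$, splits on $\lambda$, and applies Lemma \ref{ps2L3} to the representative $a$, tacitly assuming that representative can be taken self-adjoint in $R$ --- your reduction sidesteps that subtlety, and also automatically copes with the case $\lambda\neq 0$ but $\lambda^*\lambda=0$, which can occur since $K$ need not be a domain. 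Second, what you call the main obstacle --- that $g$ must fix every element of $W=\{w\in R : cw=-\nu w\}$, not merely the projections in $W$ --- is also the crux of the paper's argument, but the paper clears it using Lemma \ref{ps2L3} only as a black box: for $w=bx+\mu x$ it sets $e_x=LP(w)$, so that $(c+\nu e_x)w=0$, hence $(c+\nu e_x)e_x=0$ by the defining property of left projections, i.e.\ $ce_x=-\nu e_x$; maximality of $g$ then gives $e_x\le g$, whence $gw=ge_xw=e_xw=w$. You instead reopen the construction $e=RP(c)\vee e_\nu$, $h=RP(c+\nu e)$, $g=e-h$ and rerun the condition-(3) argument; this is correct but relies on the internals of the lemma's proof rather than its statement. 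The paper's $LP$ trick is the slicker passage from projections in $W$ to all of $W$; your version is self-contained and in the respects above somewhat more careful, at the cost of duplicating part of the proof of Lemma \ref{ps2L3}.
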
	
\begin{proof} Let $\hat{R_1} = R_1 /N = \{[ a ,\lambda ] ~|~ ( a , \lambda ) \in R_1 \}$ and $ \hat{R_1} $ has unity element $u = [0, 1]$.
By Lemma \ref{ps2L4}, it is enough to show that every self-adjoint element of $ \hat{R_1} $ has the right projection.
	Let $[ a, \lambda ] \in \hat{R_1}~$ be a self-adjoint element. If $\lambda = 0$ then $ e = RP (a)$ and by  Theorem \ref{ps2T3} $\bar{e} = RP (\bar{a} )$. Suppose $\lambda \neq 0$. Then by Lemma \ref{ps2L3} there exists a largest projection $g$ such that $a g = - \lambda g$.
	Now we show that $RP ([ a , \lambda ]) = [-g , 1 ]$.
	Note that $[- g , 1 ]$ is a projection. Also, $[ a , \lambda ] [- g , 1 ] = [ - ag - \lambda g +a ,\lambda] = [ a , \lambda ]$.
	Moreover $[ a , \lambda ] [ b , \mu ] = 0$ if and only if $[ ab + \mu a + \lambda b , \lambda \mu ] = 0$ if and only if $a b x + \mu a x + \lambda b x + \lambda \mu x =0,~ \forall x \in R$ if and only if $a( b x + \mu x) + \lambda (b x + \mu x )=0, ~ \forall x \in R$ if and only if $( a + \lambda e_x ) (b x + \mu x ) = 0$ where $e_x = LP (b x + \mu x )$ if and only if $( a + \lambda e_x ) e_x = 0,~ \forall x \in R$ if and only if $a e_x = - \lambda e_x,~ \forall x \in R$. Since $g$ is the largest projection such that $a g = - \lambda g$. Therefore $e_x \leq g$. This gives $ e_x g = g e_x = e_x$. Therefore $[ a , \lambda ] [ b , \mu ] = 0$ if and only if $( e_x - g ) e_x = 0,~ \forall x \in R$ if and only if $( e_x - g ) ( b x + \mu x) = 0,~ \forall x \in R$ if and only if $ - g ( b x + \mu x) + e_x ( b x + \mu x ) = 0,~ \forall x \in R$ if and only if $- g b x - \mu g x + b x + \mu x =0,~ \forall x \in R$ if and only if $[ -g b - \mu g + b , \mu ] = 0$ if and only if $[ - g , 1 ] [ b , \mu ] = 0$. 
	Hence $\hat{R_1}$ is a Rickart $*$-ring.
\end{proof}

\section{Unitification of Weakly p.q.-Baer $*$-rings}  

  We recall the following examples of p.q.-Baer $*$-rings. This also shows how the class of p.q.-Baer $*$-rings is different than the class of Rickart $*$-rings.  
  \begin{example}  [{\cite[Exercise 10.2.24.4]{Bir5}}]
 	Let $A$ be a domain, $A_n=A$ for all $n=1,2, \cdots$, and $B$ be the ring of $(a_n)_{n=1}^{\infty} \in \prod_{n=1}^{\infty}A_n$ such that $a_n$ is eventually constant, which is a subring of $\prod_{n=1}^{\infty}A_n$. Take $R=M_n(B)$, where $n$ is an integer such that $n >1$. Let $*$ be the transpose involution of $R$. Then $R$ is a p.q.-Baer $*$-ring which is not quasi-Baer (hence not a quasi-Baer $*$-ring). Also, if $A$ is commutative which is not Pr$\ddot{u}$fer, then $R$ is not a Rickart $*$-ring.
 \end{example}   
 \begin{example} [{\cite[Exercise 10.2.24.5]{Bir5}}]
 	Let $R$ be a $*$-ring. If $R$ is a right (or left) p.q.-Baer ring and $*$ is semiproper, then $R$ is a p.q.-Baer $*$-ring. Hence, if $R$ is biregular and $*$ is semiproper, then $R$ is a p.q.-Baer $*$-ring.  
 \end{example} 
 \begin{example}  [{\cite[Example 1.7]{Anil}}] Let $ R={\displaystyle\left\{\begin{bmatrix}  a  & b \\  c &   d   \end{bmatrix} \in M_2(\mathbb Z)~|~
 		a \equiv d,~b \equiv 0,~ {\rm and} ~c \equiv 0 ~(mod~2)\right \}}$.
 	Consider  involution $*$ on $R$ as the transpose of the matrix.
 	In \cite[Example 2(1)]{Kim}, it is shown that $R$ is neither right p.p. nor left
 	p.p. (hence not a Rickart $*$-ring) but
 	$r_R(uR) = \{0\}=0R$ for any nonzero element $u \in
 	R$. Therefore $R$ is a p.q.-Baer $*$-ring.
 \end{example}
Recall the following result which gives the condition on $m$ and $n$ so that the matrix ring $M_n(\mathbb Z_m)$ is a Baer $*$-ring and hence a Rickart $*$-ring.
 \begin{corollary} [{\cite[Corollary 7]{Tha2}}]
 	\label{c031'} (i) $M_n(\mathbb Z_m)$ is a Baer
 	$*$-ring for $n \geq 2$ if and only if $n=2$ and $m$ is a square
 	free integer whose every prime factor is of form $4k+3$.\\
 	(ii) $\mathbb Z_m$ is a Baer $*$-ring if and only if $m$ is a
 	square free integer.
 \end{corollary}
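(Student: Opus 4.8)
The plan is to reduce everything to the residue rings $M_n(\mathbb{F}_p)$ through the Chinese Remainder Theorem, using throughout that a Baer $*$-ring is a Rickart $*$-ring and hence, by Proposition \ref{Pro1}, has a unity and a proper involution. Part (ii) is the warm-up: on $\mathbb{Z}_m$ the only involution is the identity, for which properness ($x^*x=0\Rightarrow x=0$) is the same as $\mathbb{Z}_m$ being reduced, i.e. $m$ square-free; conversely if $m=p_1\cdots p_r$ is square-free then $\mathbb{Z}_m\cong\prod_i\mathbb{F}_{p_i}$ is a finite product of fields, each a Baer $*$-ring under the identity involution, and a finite product of Baer $*$-rings is Baer $*$ since annihilators and projections are computed coordinatewise.

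For the necessity in (i) I would first note that properness of the transpose involution already forces $m$ square-free: if $p^2\mid m$, the matrix with single nonzero entry $m/p$ in position $(1,1)$ is a nonzero $a$ with $a^*a=a^2=0$. Granting $m$ square-free, $M_n(\mathbb{Z}_m)\cong\prod_i M_n(\mathbb{F}_{p_i})$, and since a product is proper only if each factor is, each $M_n(\mathbb{F}_{p_i})$ has proper transpose involution. Looking at a matrix supported on a single column shows properness is equivalent to the form $x_1^2+\cdots+x_n^2$ being anisotropic over $\mathbb{F}_{p_i}$. By Chevalley--Warning a quadratic form in $n\geq 3$ variables over a finite field always has a nontrivial zero, ruling out $n\geq 3$; and $x^2+y^2$ is anisotropic over $\mathbb{F}_p$ precisely when $-1$ is a nonsquare, that is when $p\equiv 3\pmod 4$ (the prime $2$ being excluded since $x^2+y^2=(x+y)^2$ is isotropic). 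This yields the stated necessary conditions.

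For sufficiency, take $n=2$ and $m=p_1\cdots p_r$ square-free with every $p_i\equiv 3\pmod 4$, so that $M_2(\mathbb{Z}_m)\cong\prod_i M_2(\mathbb{F}_{p_i})$. Each factor $A=M_2(\mathbb{F}_p)$ is simple Artinian, hence semisimple and so a Baer ring: every right annihilator $r_A(S)$ equals $gA$ for some idempotent $g$. Since $p\equiv 3\pmod 4$ makes the transpose involution proper and $A$ is von Neumann regular, $A$ is $*$-regular, so every principal right ideal---in particular $gA$---is generated by a projection. Hence $r_A(S)=gA=eA$ with $e$ a projection, showing $A$ is a Baer $*$-ring, and the finite product is then a Baer $*$-ring as well.

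The main obstacle is this final upgrade from a Baer ring to a Baer $*$-ring: once $m$ is square-free each factor is semisimple and the bare Baer property is immediate, so the real content is that the generating idempotents can be taken self-adjoint. This is exactly where the arithmetic hypothesis enters, through the implication that a von Neumann regular ring with proper involution is $*$-regular. That is the step I would justify most carefully, citing the standard theory of $*$-regular and Baer $*$-rings, the Chinese Remainder and Chevalley--Warning reductions being routine.
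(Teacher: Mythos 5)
Your proof is correct, but there is nothing in the paper to compare it against: the paper states this corollary purely as a citation of Thakare and Waphare \cite{Tha2} and gives no proof of it, so your argument has to be judged on its own merits. It holds up. The necessity direction is sound: a Baer $*$-ring is a Rickart $*$-ring, so by Proposition \ref{Pro1} the involution is proper; your matrix $(m/p)E_{11}$ shows properness forces $m$ square-free; the CRT splitting $M_n(\mathbb{Z}_m)\cong\prod_i M_n(\mathbb{F}_{p_i})$ is a $*$-isomorphism for the transpose involution; the single-column computation correctly identifies properness over $\mathbb{F}_p$ with anisotropy of $x_1^2+\cdots+x_n^2$ (one direction needs that vanishing of the diagonal of $a^Ta$ kills each column separately, which anisotropy gives); Chevalley--Warning rules out $n\geq 3$; and $x^2+y^2$ is anisotropic over $\mathbb{F}_p$ exactly when $p\equiv 3\pmod 4$. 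For sufficiency, you correctly isolate the one nontrivial ingredient: $M_2(\mathbb{F}_p)$ is semisimple, hence trivially a Baer ring, and the upgrade from idempotent generators to projection generators is exactly the classical fact that a von Neumann regular ring with proper involution is $*$-regular, i.e.\ every principal one-sided ideal is generated by a projection; combined with closure of the Baer $*$-property under finite products, this finishes part (i), and part (ii) is the degenerate commutative case (identity involution, properness $=$ reducedness $=$ square-freeness). This reduction to finite fields and the sum-of-squares form is almost certainly the same skeleton as the original Thakare--Waphare argument, but since their proof is not reproduced here, the fair summary is simply that your blind proof is a complete and correct substitute for the missing one.
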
   
The following example shows that the right projections in a $*$-ring need not be central covers.
 \begin{example} [{\cite[Example 2.8]{Anil}}]
 	\label{ex01} Let $A=M_2$($\mathbb{Z}_3$), 
 	which is a Baer $*$-ring (hence a p.q.-Baer $*$-ring and a Rickart $*$-ring) with transpose
 	as an involution.  There is an element $x \in A$ such that
 	$RP(x)$ is not equal to $C(y)$ for any $y \in A$. 
 \end{example}       
 The following is a partial solution of the Problem 2 given in \cite{Anil}. 
  
  \begin{theorem}[{\cite[Theorem 4.6]{Anil}}]  A weakly p.q.-Baer $*$-ring $R$ can be embedded in a p.q.-Baer $*$-ring, provided there exists, a ring $K$ such that,
  	\begin{enumerate}
  	\item $K$ is an integral domain with involution,
  \item $R$ is a $*$-algebra over $K$,
  \item For any $\lambda \in K-\{0\}$ there exists a projection
  $e_{\lambda} \in R$ that is an upper bound for the central covers
  of the right annihilators of $\lambda$, that is, for $t \in R$, if
  $\lambda~t=0$ then $C(t) \leq e_{\lambda}$.
  \end{enumerate}
  \end{theorem}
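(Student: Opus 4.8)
The plan is to imitate the proof of Theorem~\ref{ps2T4}, systematically replacing right projections by central covers and the one-sided annihilator conditions $ay=0$ by the two-sided conditions $aRy=0$ that govern central covers. I would form the unitification $R_1=R\oplus K$ with the $*$-algebra operations recalled above; it has unity $(0,1)$ and contains $R$ as a $*$-ideal via $a\mapsto(a,0)$. Since $R_1$ has a unit, the equivalence ``p.q.-Baer $\Leftrightarrow$ weakly p.q.-Baer with unity'' of \cite[Theorem~3.9]{Anil} reduces the task to showing that $R_1$ is weakly p.q.-Baer, that is, that every $(a,\lambda)$ admits a central cover $h\in R_1$ with $(a,\lambda)R_1(b,\mu)=0$ if and only if $h(b,\mu)=0$; semi-properness of the resulting involution is then automatic by \cite[Proposition~2.4]{Anil}.

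When $\lambda=0$ I would show that the embedding preserves central covers, the exact analogue of Theorem~\ref{ps2T3}$(4)$. If $C(a)$ is the central cover of $a$ in the weakly p.q.-Baer ring $R$, then $(C(a),0)$ is a central projection of $R_1$ (an element $(c,0)$ is central in $R_1$ exactly when $c$ is central in $R$) fixing $(a,0)$, and the symmetric characterisation $aRy=0\Leftrightarrow C(a)y=0$ from \cite[Theorem~2.5]{Anil} shows that it is the central cover of $(a,0)$.

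The real content is the case $\lambda\neq0$. Here I would first prove the central-cover analogue of Lemma~\ref{ps2L3}: for \emph{every} $a\in R$ and every $\lambda\neq0$ there is a largest central projection $g$ with $ag=\lambda g$. Condition~$(3)$ supplies a projection $e_\lambda$ bounding the central covers of the right annihilators of $\lambda$; replacing $e_\lambda$ by its own central cover I may assume $e_\lambda$ is central, so that $e=C(a)\vee e_\lambda$ is again a central projection (the central projections form a Boolean algebra, so finite joins exist). Since $e$ is central and $e\ge C(a)$ one has $a=ea=eae\in eRe$ \emph{without any appeal to self-adjointness} --- this is exactly where the central-cover setting is more flexible than the right-projection setting of Lemma~\ref{ps2L3} --- whence $a-\lambda e\in eRe$; putting $g=e-C(a-\lambda e)$ gives $(a-\lambda e)g=0$, hence $ag=\lambda g$, and maximality follows as in Lemma~\ref{ps2L3}, since condition~$(3)$ forces $C(k)\le e_\lambda\le e$ for every central $k$ with $ak=\lambda k$. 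Granting this, I would claim that $C((a,\lambda))=(-g,1)$, where $g$ is the largest central projection with $ag=-\lambda g$: one checks that $(-g,1)$ is a central projection fixing $(a,\lambda)$, and then verifies $(a,\lambda)R_1(b,\mu)=0\Leftrightarrow(-g,1)(b,\mu)=0$ by running the chain of equivalences of Theorem~\ref{ps2T4} with the central cover $C(bx+\mu x)$ --- bounded above by $e_\lambda\le e$, hence by $g$ --- in place of $e_x=LP(bx+\mu x)$, using $xRy=0\Leftrightarrow C(x)y=0$ at each step.

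I expect the main obstacle to be the passage from one-sided to two-sided conditions. Because central covers are controlled by $aRy=0$ rather than $ay=0$, the whole argument must be carried out inside the Boolean lattice of central projections: one has to secure the centrality of $e_\lambda$ (by passing to its central cover), of the join $e$, and of the resulting $g$, and --- most delicately --- the concluding equivalence must treat the full product $(a,\lambda)R_1(b,\mu)$ rather than a single product $(a,\lambda)(b,\mu)$. This forces repeated use of the symmetric characterisation $xRy=0\Leftrightarrow yRx=0\Leftrightarrow C(x)y=0$ of \cite[Theorem~2.5]{Anil} and a noticeably longer computation than in the weakly Rickart case; it is precisely here that hypothesis~$(3)$, phrased through central covers of annihilators of $\lambda$, must be shown to be exactly strong enough to bound the central covers produced by the two-sided products.
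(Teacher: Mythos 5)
Your plan follows the same skeleton as the paper's treatment of this statement: the paper subsumes it under its more general final theorem (commutative $K$ with condition ($\beta$)), whose proof is exactly your outline --- the largest-central-projection lemma (Lemma \ref{lm3.7}, proved ``on the similar line of'' Lemma \ref{ps2L3}, and indeed stated for arbitrary rather than self-adjoint $a$, just as you predicted), preservation of central covers in the $\lambda=0$ case (Theorem \ref{thm5}(4)), and the identification of the central cover of $(a,\lambda)$, $\lambda\neq 0$, as $(-g,1)$. The one genuine structural difference is that you work in $R_1=R\oplus K$ itself, whereas the paper works in the quotient $\hat{R_1}=R_1/N$. For the present statement your choice is legitimate and even simpler, but only because $K$ is an integral domain: from $(a,\lambda)(0,1)(b,\mu)=0$ one gets $\lambda\mu=0$, hence $\mu=0$, and the remaining computation then runs with $C(b)$ in place of the paper's $C(bx+\mu x)$. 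You never make this observation; instead you announce that you will ``run the chain of equivalences of Theorem \ref{ps2T4} with $C(bx+\mu x)$.'' But expressions $bx+\mu x$ quantified over $x\in R$ are artifacts of the quotient construction, where $[c,\nu]=0$ means $cx+\nu x=0$ for all $x$; in $R_1$ equality is componentwise and no such $x$ occurs. As written, your sketch mixes the two constructions, and it must be carried out consistently in one of them.

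Two of your justifications are also wrong as stated, although the correct arguments are the ones you point to in Lemma \ref{ps2L3}. First, in the maximality part of your lemma you say condition (3) forces $C(k)\leq e_\lambda$ for every central projection $k$ with $ak=\lambda k$; but $k$ is not annihilated by $\lambda$, so condition (3) says nothing about $C(k)$. The correct step applies condition (3) to $ek-k$, which satisfies $\lambda(ek-k)=0$, giving $C(ek-k)\leq e_\lambda\leq e$, hence $ek=k$, hence $(a-\lambda e)k=0$ and $k\leq g$. Second, in the main equivalence you claim the central cover of the relevant element ($C(bx+\mu x)$, or $C(b)$ in the $R_1$ version) is ``bounded above by $e_\lambda\leq e$, hence by $g$''; that is false --- this projection has nothing to do with annihilators of $\lambda$. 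What actually bounds it by $g$ is the maximality of $g$, after one first derives $a\,C(b)=-\lambda\,C(b)$ from $(a+\lambda C(b))Rb=0$ via the symmetric annihilator characterization, exactly as in the paper's chain. With these repairs, and one consistent choice of ambient ring, your proof goes through and coincides in substance with the paper's.
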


Let $\tilde{R}$ denote the set of all projections in a $*$-ring $R$.
 In a weakly p.q. Baer $*$-ring, following is called the condition ($\beta$): For any $0\neq \lambda \in K, \exists ~ e_\lambda \in \tilde {R}$ such that $\lambda x = 0$ implies that $C(x) \leq e_\lambda$, where $K$ is a commutative $*$-ring with unity.

   \begin{lemma} \label{lm3.7}
   Let $R$ be weakly p.q. Baer $*$-ring which is a $*$-algebra over a commutative $*$-ring $K$ with unity satisfying condition ($\beta$). Then for any $a \in R$ and $0\neq \lambda \in K$ there exists a largest central projection $g$ such that $a g = \lambda g$.
   \end{lemma}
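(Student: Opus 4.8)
The plan is to follow the proof of Lemma~\ref{ps2L3}, with the right and left projections used there replaced by central covers; the tool that takes over the role of the projection identities is the characterisation $xRy=0\iff C(x)y=0$ of the central cover in a weakly p.q.-Baer $*$-ring. The first step is to localise at $e'=C(a)$, a central projection satisfying $e'a=ae'=a$. Given any central projection $g$, set $u=e'g$ and $v=g-e'g$, so that $g=u+v$, $e'u=u$ and $e'v=0$; note $av=ag-(ae')g=ag-ag=0$. Multiplying the relation $ag=\lambda g$ by $e'$ then shows that it is equivalent to the two independent conditions $au=\lambda u$ and $\lambda v=0$. I would therefore solve the two maximisation problems separately and add the answers.

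Inside $e'$ the construction is clean and remains central. As $e'$ is central, $a-\lambda e'\in e'Re'$ and $C(a-\lambda e')\le e'$, so $g_1:=e'-C(a-\lambda e')$ is a central projection below $e'$. From $(a-\lambda e')e'=a-\lambda e'$ and $(a-\lambda e')C(a-\lambda e')=a-\lambda e'$ we get $(a-\lambda e')g_1=0$, whence $ag_1=\lambda g_1$. For maximality, take a central projection $k\le e'$ with $ak=\lambda k$; then $(a-\lambda e')k=ak-\lambda e'k=0$, and since $k$ is central, $(a-\lambda e')rk=(a-\lambda e')kr=\bigl((a-\lambda e')k\bigr)r=0$ for every $r\in R$. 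Hence $(a-\lambda e')Rk=0$, so $C(a-\lambda e')k=0$ and $k=e'k=g_1k$, i.e. $k\le g_1$. The centrality of $k$ is exactly what converts the single relation $(a-\lambda e')k=0$ into the two-sided $(a-\lambda e')Rk=0$ needed to apply the central cover.

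Orthogonal to $e'$ the equation $ag=\lambda g$ collapses, because $av=0$, to $\lambda v=0$; so I need the \emph{largest} central projection $v$ with $e'v=0$ and $\lambda v=0$. Once it is found, $g_1$ and $v$ are orthogonal central projections, $g:=g_1+v$ satisfies $ag=\lambda g$, and it is the largest such central projection: any central $k$ with $ak=\lambda k$ splits as $k=e'k+(k-e'k)$ with $e'k\le g_1$ (by the previous paragraph) and $k-e'k\le v$, so $k\le g$. This existence statement for $v$ is the main obstacle, and it is where condition $(\beta)$ enters. Condition $(\beta)$ ensures that every central projection annihilated by $\lambda$ lies below $e_\lambda$, so the family is bounded above, and it is clearly closed under finite joins; what is missing is that its supremum exists and is again annihilated by $\lambda$. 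I would derive this from completeness of the lattice of central projections of $R$, the delicate point being to check $\lambda v=0$ for the supremum. Equivalently, if $e_\lambda$ may be chosen central with $\lambda e_\lambda=0$, then $v=e_\lambda-e_\lambda e'$ works, and with the central projection $e=e'\vee v$ the identity $\lambda v=0$ gives $a-\lambda e=a-\lambda e'$, so the whole construction condenses to the single formula $g=e-C(a-\lambda e)$, in exact parallel with Lemma~\ref{ps2L3}.
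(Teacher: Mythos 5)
Your decomposition $g = e'g + (g - e'g)$ relative to $e' = C(a)$ and your treatment of the part under $e'$ are correct: $g_1 = e' - C(a-\lambda e')$ is indeed the largest central projection below $e'$ with $ag_1 = \lambda g_1$, and using centrality of $k$ to upgrade $(a-\lambda e')k = 0$ to $(a-\lambda e')Rk = 0$ is exactly the right mechanism. The genuine gap is the orthogonal part, and you have named it yourself: you need the largest central projection $v$ with $e'v = 0$ and $\lambda v = 0$, and neither of your proposed routes to it is available under the hypotheses. Completeness of the lattice of central projections is not assumed and need not hold in a weakly p.q.-Baer $*$-ring; and condition ($\beta$) does not say $\lambda e_\lambda = 0$ --- it only makes $e_\lambda$ an upper bound for the central covers of elements annihilated by $\lambda$, so $e_\lambda$ itself need not be annihilated by $\lambda$. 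As written, the proof is therefore incomplete.

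The missing idea --- and it is how the paper's proof (a transcription of Lemma \ref{ps2L3} with $RP$ replaced by $C$) proceeds --- is to obtain $v$ as a central cover of an actual ring element rather than as a lattice supremum. Put $e = e' \vee e_\lambda = e' + e_\lambda - e'e_\lambda$, $h = C(a - \lambda e)$ and $g = e - h$; the central cover $h$ exists because $R$ is weakly p.q.-Baer, and $(a-\lambda e)g = 0$ gives $ag = \lambda g$. In your notation $v = (e - e') - C\bigl(\lambda(e-e')\bigr)$, and the ``delicate point'' you worried about is automatic: $\lambda v = \lambda(e-e')v = \bigl[\lambda(e-e')\,C(\lambda(e-e'))\bigr]v = 0$ since $C(\lambda(e-e'))v = 0$; no hypothesis $\lambda e_\lambda = 0$ is needed anywhere. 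Condition ($\beta$) enters only in the maximality step: if $k$ is central with $ak = \lambda k$, then $\lambda(ek-k) = 0$, so $C(ek-k) \le e_\lambda \le e$, and since $(ek-k)e = 0$ this forces $ek = k$; then $(a-\lambda e)k = 0$, centrality of $k$ gives $(a-\lambda e)Rk = 0$, hence $hk = 0$ and $kg = ke - kh = k$, i.e.\ $k \le g$. (One caveat you raise is legitimate and applies to the paper as well: for $e$ and $g$ to be central one does need $e_\lambda$ central, which condition ($\beta$) as stated does not make explicit; but the additional assumption $\lambda e_\lambda = 0$ in your fallback is superfluous.)
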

   \begin{proof}
   	On the similar line of Lemma \ref{ps2L3}.
   \end{proof}

  The following result leads to the solution of Problem 2.  

\begin{theorem}  \label{thm5}
   With notation as defined earlier
  \begin{enumerate}
  	\item   The mapping $a \rightarrow \bar{a} = [a, 0]$ is an algebra homomorphism of $R$ into $\hat{R_1}$.
  	\item If $L(R) = \{ x \in R :  x y = 0,~ \forall y \in R \} = \{0\} $  then the mapping $a \rightarrow \bar{a}$ is injective and we may regard $R$ as embedded in $\hat{R_1}$.
 	\item If the involution $*$ is semi-proper then $[a, \lambda] = 0$ if and only if $ [a^*, \lambda^*] = 0$. Hence
  $[a, \lambda]^* = [ a^*, \lambda^*]$  defines involution in $\hat{R_1}$.
  	\item If $R$ is weakly p.q. Baer $*$-ring, $ a \in R, C(a) = e $ then $ C(\bar{a}) = \bar{e} $ in $\hat{R_1}$.
\end{enumerate}
\end{theorem}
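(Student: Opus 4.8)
The plan is to prove the four parts in parallel with Theorem~\ref{ps2T3}, replacing \emph{proper} by \emph{semi-proper} and \emph{right projection} by \emph{central cover}. Throughout I would use that $[a,\lambda]=0$ in $\hat{R_1}$ means exactly $ax+\lambda x=0$ for all $x\in R$, i.e. $L_a=-\lambda I$ on $R$. Parts $(1)$ and $(2)$ are routine: for $(1)$ I would read $\overline{a+b}=\bar a+\bar b$, $\overline{\mu a}=\mu\bar a$ and $\overline{ab}=[ab,0]=[a,0][b,0]=\bar a\,\bar b$ off the product $(a,\lambda)(b,\mu)=(ab+\mu a+\lambda b,\lambda\mu)$; for $(2)$, $\bar a=[a,0]=0$ says $ax=0$ for every $x$, i.e. $a\in L(R)$, so $L(R)=\{0\}$ gives injectivity and we identify $R$ with its image.

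Part $(3)$ is the crux, and the place where the weaker hypothesis must be handled with care; this is where I expect the main obstacle. Because $*$ is only semi-proper ($cRc^*=0\Rightarrow c=0$) and not proper, I cannot copy the ``$c^*c=0$'' argument of Theorem~\ref{ps2T3}$(3)$; I must instead produce an element $d$ with $dRd^*=0$, with its factors oriented so that the relation $ay=-\lambda y$ can be applied. Assuming $[a,\lambda]=0$, so $ay=-\lambda y$ for all $y$, I would not attack $a^*w+\lambda^*w=0$ directly but prove the equivalent adjoint statement $ya+\lambda y=0$ for all $y$ (equivalent because applying $*$ sends $a^*w+\lambda^*w=0$ to $w^*a+\lambda w^*=0$, and $w^*$ runs over $R$). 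Fixing $y$, setting $d=ya+\lambda y$ and $d^*=a^*y^*+\lambda^*y^*$, I would expand $d\,z\,d^*$: now $a$ sits immediately to the left of the interior factors, so each substitution $a(z')=-\lambda z'$ applies and the four resulting products cancel in two pairs, giving $dRd^*=0$. Semi-properness then forces $d=0$, hence $ya+\lambda y=0$ for all $y$ and $[a^*,\lambda^*]=0$; the converse is the same argument applied to $(a^*,\lambda^*)$. Thus $N$ is $*$-invariant, $[a,\lambda]^*=[a^*,\lambda^*]$ is well defined, and it is an involution inherited from that on $R_1$.

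For part $(4)$, write $C(a)=e$: a central projection with $ea=a$ (so $a=ae$) and, by the weakly p.q.-Baer property, $aRy=0\iff ey=0$. First I would check that $\bar e=[e,0]$ is a central projection of $\hat{R_1}$ (idempotent and self-adjoint are immediate, centrality uses $eb=be$) and that $\bar e\,\bar a=[ea,0]=\bar a$. The heart of the matter is the equivalence $\bar a\,\hat{R_1}\,[b,\mu]=0$ if and only if $\bar e\,[b,\mu]=0$. For ``$\Rightarrow$'' I would test against the unit $[0,1]$ and against the elements $[c,0]$ to obtain $a(bx+\mu x)=0$ and $ac(bx+\mu x)=0$ for all $c,x$, i.e. $aR(bx+\mu x)=0$, whence $e(bx+\mu x)=0$, that is $\bar e\,[b,\mu]=0$. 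For ``$\Leftarrow$'', $\bar e\,[b,\mu]=0$ gives $e(bx+\mu x)=0$, so $a(bx+\mu x)=ae(bx+\mu x)=0$ and $aR(bx+\mu x)=0$, and expanding $\bar a\,[c,\gamma]\,[b,\mu]$ shows it vanishes for all $c,\gamma$. Finally, for any central projection $p$ with $p\bar a=\bar a$, centrality gives $\bar a(u-p)=0$, hence $\bar a\,\hat{R_1}(u-p)=0$, and the equivalence yields $\bar e(u-p)=0$, i.e. $\bar e\le p$; so $\bar e$ is the smallest central projection fixing $\bar a$, which is exactly $C(\bar a)$. Everything outside the semi-proper step in $(3)$ is a direct computation in the quotient.
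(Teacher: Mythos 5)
Your proposal is correct and follows essentially the same route as the paper: parts (1)--(2) are the same routine verifications, part (3) is literally the paper's computation (the paper shows $(x^*a+\lambda x^*)\,r\,(a^*x+\lambda^*x)=0$ and invokes semi-properness, which is your $d\,z\,d^*=0$ argument under the renaming $y=x^*$), and part (4) establishes the same equivalence $\bar a\,\hat{R_1}\,[b,\mu]=0\iff\bar e\,[b,\mu]=0$ via the weakly p.q.-Baer property of $e=C(a)$. The only difference is cosmetic: you spell out the final minimality step (any central projection $p$ fixing $\bar a$ satisfies $\bar e\le p$), which the paper leaves implicit when it concludes $C(\bar a)=\bar e$.
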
 
\begin{proof}
  $(1)$ is trivial.\\
 $(2)$ To prove $\phi : R \rightarrow \hat{R_1}$ given by $\phi (a) =\bar{a}$ is injective. 
  Let $\phi(a)=\phi(b)$. Then $\bar{a} = \bar{b}$, that is $[a, 0] = [b, 0]$.
  This gives $ ax = bx, \forall  x \in R$. Therefore 
  $(a-b)x =0,~ \forall  x \in R$. This gives $a-b =0$. Hence $a = b$.\\
  $(3)$ Suppose $R$ has semi-proper involution, therefore for $a\in R$, $a^* Ra = 0$ implies that $a= 0$.
  Now, $[a, \lambda ] = 0$ if and only if $ax + \lambda x = 0,~\forall x \in R$. 
  Also, for any $r \in R$, $(x^* a + \lambda x^*) r ( a^*x + \lambda^* x)
  = x^* a r a^* x + x^* a r \lambda^* x + \lambda x^* r a^* x + \lambda x^* r \lambda^* x $\\ 
  $= x^* \{a (r a^* x) + \lambda (r a^*x) \} + x^* \{ a(r \lambda^* x) + \lambda( r \lambda^* x)\}= x^* 0 + x^* 0 = 0$. Therefore $[a, \lambda] = 0$ if an only if  $(x^* a + \lambda x^*) R (a^* x + \lambda^* x) = 0$ if and only if $(a^* x + \lambda^* x) = 0$ if and only if $[a^* \lambda^*] = 0$.
  Hence $[a, \lambda ]^*=[a^* \lambda^*]$  defines an involution in $\hat{R_1}$.\\
 $(4)$ Let $R$ be weakly p.q. Baer $*$-ring, $a \in R$ and $C(a) = e$.
  Consider $\bar{a} \bar{e} = [a, e] [e, 0] = [ae, 0] =[a, 0] = \bar{a}$.
  Also, $\bar{a} \hat{R_1} [b, \mu] = 0$ if and only if  $\bar{a}\bar{e} \hat{R_1} [b, \mu] = 0$ if and only if $\bar{a} \hat{R_1}\bar{e} [b, \mu] = 0$ if and only if $[a, 0] \hat{R_1} [e b + \mu e 0] = 0$ if and only if $[a, 0] [x, \lambda] [e b + \mu e, 0] = 0$ if and only if $[a (x + \lambda e) (e b + \mu e), 0] = 0$ if and only if 
  $a (x + \lambda e) (e b + \mu e) = 0$ if and only if $a R (e b + \mu e)=0$ if and only if $e(e b + \mu e)=0$ if and only if  $ e b + \mu e=0$ if and only if $(e b + \mu e)x=0,~\forall x\in R$ if and only if $[e b + \mu e,0]=0 $ if and only if $ [e,0][b,\mu]=0$. Therefore $C(\bar{a}) =\bar{e} $.
  \end{proof}
  	
  Now we give the more general partial solution to the Problem 2, in which we replace integral domain $K$ by any commutative ring with unity.
  \begin{theorem}
  Let $R$ be a weakly p.q. Baer $*$-ring and $K$ be a commutative $*$- ring with unity such that $R$ is a $*$-algebra over $K$ satisfying condition ($ \beta $). Then $R$ can be embedded in a p.q. Baer $*$-ring with preservation of central covers. 
  \end{theorem}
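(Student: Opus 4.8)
The plan is to mirror almost exactly the argument used for Theorem \ref{ps2T4}, replacing the role of right projections by central covers and the role of Lemma \ref{ps2L3} by Lemma \ref{lm3.7}. First I would set up the unitification $\hat{R_1}=R_1/N$ with its unity $u=[0,1]$, and note that by Theorem \ref{thm5} the map $a\mapsto\bar a=[a,0]$ embeds $R$ into $\hat{R_1}$ as a $*$-algebra, the involution $[a,\lambda]^*=[a^*,\lambda^*]$ is well defined and semi-proper (since the involution of a weakly p.q.-Baer $*$-ring is semi-proper by the definition and condition $(\beta)$), and central covers are preserved: $C(\bar a)=\overline{C(a)}$. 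Thus it remains only to show that $\hat{R_1}$ is itself a p.q.-Baer $*$-ring, i.e.\ that every element possesses a central cover in $\hat{R_1}$, equivalently that $\hat{R_1}$ is a weakly p.q.-Baer $*$-ring with unity.

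Next I would verify the central cover of an arbitrary element $[a,\lambda]\in\hat{R_1}$. The case $\lambda=0$ is handled directly by Theorem \ref{thm5}(4): $C([a,0])=[C(a),0]$. For $\lambda\neq0$ I would invoke Lemma \ref{lm3.7} to produce the largest \emph{central} projection $g$ in $R$ with $ag=-\lambda g$, and then claim that $C([a,\lambda])=[-g,1]$. One checks $[-g,1]$ is a central projection of $\hat{R_1}$ (centrality follows from $g$ being central in $R$), and that $[a,\lambda][-g,1]=[a-ag-\lambda g,\lambda]=[a,\lambda]$. The heart of the matter is then the annihilator computation: expanding $[a,\lambda]\hat{R_1}[b,\mu]=0$ exactly as in the proof of Theorem \ref{ps2T4}, one reduces via the identity $a(bx+\mu x)+\lambda(bx+\mu x)=0$ to a statement of the form $ae_x=-\lambda e_x$ for suitable central projections $e_x=C(bx+\mu x)$, uses the maximality of $g$ to conclude $e_x\leq g$, and then reverses the chain to obtain $[-g,1]\hat{R_1}[b,\mu]=0$. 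This shows $[a,\lambda]\hat{R_1}[b,\mu]=0$ if and only if $[-g,1][b,\mu]=0$, which is precisely the central-cover condition.

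The step I expect to require the most care is the annihilator manipulation in the $\lambda\neq0$ case, because the two-sided $\hat{R_1}$-multiplication $[a,\lambda]\hat{R_1}[b,\mu]=0$ must be translated faithfully into a condition purely inside $R$; here one must use that elements of $\hat{R_1}$ have the form $[x,\nu]$ and that left multiplication by the full unital ring $\hat{R_1}$ corresponds, after absorbing the scalar part $\nu$ into the central projection via $C$, to the annihilation of $a R$ against $bx+\mu x$. Condition $(\beta)$ is what guarantees that the relevant central projections $e_x$ are uniformly dominated and that Lemma \ref{lm3.7} applies. Once the equivalence $[a,\lambda]\hat{R_1}[b,\mu]=0\iff[-g,1][b,\mu]=0$ is established, semi-properness of the involution on $\hat{R_1}$ together with the existence of these central covers shows $\hat{R_1}$ is weakly p.q.-Baer with unity, hence p.q.-Baer, and the preservation of central covers from Theorem \ref{thm5}(4) completes the proof.
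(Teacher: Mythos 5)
Your proposal is correct and follows essentially the same route as the paper's own proof: the same unitification $\hat{R_1}=R_1/N$, the same case split on $\lambda=0$ versus $\lambda\neq 0$, the same candidate central cover $[-g,1]$ supplied by Lemma \ref{lm3.7}, and the same annihilator computation reducing $[a,\lambda]\hat{R_1}[b,\mu]=0$ to $ae_x=-\lambda e_x$ with $e_x=C(bx+\mu x)$ and then using maximality of $g$. The one point you flagged as delicate is handled in the paper exactly as you anticipate: the two-sided condition is unwound through $(a+\lambda e_x)R(bx+\mu x)=0$, which by the central-cover property yields $(a+\lambda e_x)e_x=0$.
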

\begin{proof}
   Let $\hat{R_1}~ = R_1/N = \{[a, \lambda] ~|~ (a, \lambda) \in R_1 \}$. Note that  $u =[0, 1]$
 is a unity element of $\hat{R_1}$. 
  We show that $\hat{R_1}$ is p.q. Baer $*$-ring. 
  It is enough to show that for every element $x \in \hat{R_1}$ there exists a central projection $e \in \hat{R_1}$ such that: (1) $x e = x$, (2) $x \hat{R_1} y = 0$ if and only if $e y = 0$.
  Let $x = [a,\lambda] \in \hat{R_1}$.
  If $\lambda=0 $, let $  C(a) =e $. By Theorem \ref{thm5}, $C(\bar{a})=\bar{e}$. 
   Suppose $\lambda \neq 0$, then by Lemma \ref{lm3.7} there exists the largest central projection $ g $ such that $ ag=-\lambda g$. Clearly $[-g,1] $ is a central  projection. Also, $[a,\lambda][-g,1]=[-ag+a-\lambda g,\lambda]=[a,\lambda] $, that is $xe=x $ with $ e=[-g,1],x=[a,\lambda] $. 
  Suppose $ [a,\lambda] \hat{R_1}[b,\mu]=0$. Therefore $[a,\lambda][r,0][b,\mu]=0 $ for all $r \in R$. This gives $[arb+\lambda rb+\mu ar+\lambda \mu r,0]=0 $ for all $r \in R$. This implies $arbx+\lambda rbx+\mu arx+\lambda \mu rx=0$ for all $r, x\in R$. That is $ar(bx+\mu x)+\lambda r(bx+\mu x)=0$ for all $r,x\in R$. Therefore $(ar+\lambda re_x) (bx+\mu x)=0$, where $e_x=C(bx+\mu x)$. This gives $(a+\lambda e_x)r(bx+ \mu x) =0$ for all $r \in R$. That is $(a+ \lambda e_x) R(bx + \mu x)=0 $. Therefore $(a+ \lambda e_x)e_x=0$. Hence $ae_x=- \lambda e_x$. Since $ g $  is a largest central projection such that $ag=(-\lambda)g $, therefore $e_x\leq g $. 
  Therefore $(1-g)e_x=0$. This gives $(1-g)e_x(bx+ \mu x) =0$. Thus $(1-g)(bx+ \mu x)=0$ for all $x \in R$. Hence $bx+\mu x - gb x- \mu g x=0$ for all $x \in R$. Therefore $[-gb- \mu g + b, \mu]=0$, that is $[-g, 1][b, \mu]=0$. Hence $\hat{R_1}$ is a p.q.-Baer $*$-ring

 \end{proof}   
    
    Disclosure statement: The authors report there are no competing interests to declare.\\


\end{document}